\documentclass[leqno,onefignum,onetabnum]{siamltex1213}
\pdfoutput=1
\usepackage{amsfonts,amsmath,amssymb,mathrsfs}
\usepackage[commentmarkup=uwave]{changes}
\usepackage{booktabs,ctable,threeparttable,multirow}
\usepackage{graphicx,boxedminipage,appendix}
\usepackage[caption=false]{subfig}
\usepackage{algorithm,algorithmic}
\usepackage{lineno,soul,color,xcolor}

\usepackage{amsthm,bm,enumerate,hyperref}
\newtheorem{theoremmy}{Theorem}[section]

\newtheorem{exper}[theoremmy]{Experiment}
\newtheorem{problem}[theoremmy]{Problem}
\numberwithin{equation}{section}

\newcommand{\new}[0]{\mathrm{new}}

\newcommand{\spans}[0]{\mathrm{span}}

\newcommand{\N}[0]{\mathcal{N}}
\newcommand{\X}[0]{\mathcal{X}}

\newcommand{\Z}[0]{\mathcal{Z}}
\newcommand{\UU}[0]{\mathcal{U}}
\newcommand{\V}[0]{\mathcal{V}}

\newcommand{\bsmallmatrix}[1]{\begin{bmatrix}\begin{smallmatrix}
		#1\end{smallmatrix}\end{bmatrix}}

\title{Refined and refined harmonic Jacobi--Davidson methods for
computing several GSVD components of a large regular matrix pair\thanks{The
work of the first author was supported by the Youth Fund of
the National Science Foundation of China (No. 12301485) and the Youth Program
of the Natural Science Foundation of Jiangsu Province (No. BK20220482), and
the work of the second author was supported by the National Science Foundation
of China (No. 12171273).}}

\author{
Jinzhi Huang\thanks{School of Mathematical Sciences, Soochow University,
	215006 Suzhou, China
	(\url{jzhuang21@suda.edu.cn})}
\and
Zhongxiao Jia\thanks{Corresponding author. Department of Mathematical Sciences,
	Tsinghua University, 100084 Beijing, China
	(\url{jiazx@tsinghua.edu.cn}).}}
\begin{document}
\maketitle

\begin{abstract}
Three refined and refined harmonic extraction-based
Jacobi--Davidson (JD) type methods are proposed, and their
thick-restart algorithms with deflation and purgation are
developed to compute several generalized singular value
decomposition (GSVD) components of a large regular matrix pair.
The new methods are called refined cross product-free (RCPF),
refined cross product-free harmonic (RCPF-harmonic) and refined
inverse-free harmonic (RIF-harmonic) JDGSVD algorithms,
abbreviated as RCPF-JDGSVD, RCPF-HJDGSVD and RIF-HJDGSVD, respectively.
The new JDGSVD methods are more efficient than the corresponding
standard and harmonic extraction-based JDSVD methods proposed previously
by the authors, and can overcome the erratic behavior
and intrinsic possible non-convergence of the latter ones.
Numerical experiments illustrate
that RCPF-JDGSVD performs better for the computation of
extreme GSVD components while RCPF-HJDGSVD and RIF-HJDGSVD suit better for
that of interior GSVD components.
\end{abstract}

\begin{keywords}
	Generalized singular value decomposition,
	generalized singular value,
	generalized singular vector,
	standard extraction,
	harmonic extraction,
	refined extraction,
	refined harmonic extraction,
	Jacobi--Davidson type method
\end{keywords}

\begin{AMS}
	65F15, 15A18, 65F10
\end{AMS}

\pagestyle{myheadings}
\thispagestyle{plain}
\markboth{REFINED AND REFINED HARMONIC JDGSVD METHODS}
{JINZHI HUANG AND ZHONGXIAO JIA}

\section{Introduction}\label{sec:1}

The GSVD was initially established by Van Loan
\cite{van1976generalizing} and developed by Paige and
Saunders \cite{paige1981towards}, and it has soon become one of the
most important matrix decompositions
\cite{golub2012matrix,stewart2001matrix,stewart90}.

Let $A\in\mathbb{R}^{m\times n}$ and
$B\in\mathbb{R}^{p\times n}$ with $m+p\geq n$.
Suppose that $\N(A)\cap\N(B)=\{\bm{0}\}$,
where $\N(\cdot)$ denotes the null space of a matrix.
Then $(A,B)$ is called a regular matrix pair.
Write $q_1=\dim(\N(A))$, $q_2=\dim(\N(B))$ and $l_1=\dim(\N(A^T))$, $l_2=\dim(\N(B^T))$, respectively, where the superscript $T$ denotes
the transpose of a matrix and $\dim(\cdot)$ denotes the dimension of a subspace.
Then the GSVD of $(A,B)$ is as follows:
\begin{equation}\label{Gsvd}
	\left\{\begin{aligned}
		&A=U\Sigma_AX^{-1}, \\[0.3em]
		&B=V\Sigma_BX^{-1},
	\end{aligned}\right.
\qquad\mbox{with}\qquad
	\left\{\begin{aligned}
		&\Sigma_A=\diag\{C,\mathbf{0}_{l_1, q_1},I_{q_2}\},
		 \\[0.3em]
		&\Sigma_B=\diag\{S,I_{q_1},\mathbf{0}_{l_2, q_2}\},
	\end{aligned}\right.
\end{equation}
where $U=[U_q,U_{l_1},U_{q_2}]$ and $V=[V_q,V_{q_1},V_{l_2}]$
are orthogonal, $X=[X_q,X_{q_1},X_{q_2}]$ is nonsingular,
and the diagonal
$C=\diag\{\alpha_1,\dots,\alpha_q\}$ and
$S=\diag\{\beta_1,\dots,\beta_q\}$ satisfy
\begin{equation*}
	0<\alpha_i,\beta_i<1 \qquad\mbox{and}\qquad
	\alpha_i^2+\beta_i^2=1,  \qquad i=1,\dots,q
\end{equation*}
with $q=n-q_1-q_2$.
Here the subscripts in the block submatrices of $U,V,X$ are
their column numbers, and $I_{i}$ and $\bm{0}_{i,j}$ denote
the $i$-by-$i$ identity matrix and $i$-by-$j$ zero matrix,
respectively. The subscripts are dropped whenever their sizes are clear from
the context. Let $u_i$, $v_i$ and $x_i$ be the $i$th columns of $U_q$, $V_q$
and $X_q$, respectively, $i=1,\dots,q$.
Then the quintuples $(\alpha_i,\beta_i,u_i,v_i,x_i)$,
$i=1,\dots,q$ are called {\em nontrivial} GSVD components of $(A,B)$.
In particular, the scalar pairs $(\alpha_i,\beta_i)$ or,
equivalently, $\sigma_i=\frac{\alpha_i}{\beta_i}$ are called
the nontrivial generalized singular values of $(A,B)$,
and $u_i,v_i$ and $x_i$ are the corresponding left and right
generalized singular vectors, respectively.

From \eqref{Gsvd}, for $i=1,2,\ldots,q$, the GSVD of $(A,B)$ can be
written in the form
\begin{equation}\label{gsvdvector}
	\left\{\begin{aligned}
		Ax_i&=\alpha_i u_i,   \\[0.3em]
		Bx_i&=\beta_i v_i, \\[0.3em]
		\beta_i A^Tu_i&=\alpha_i B^Tv_i.
	\end{aligned}\right.
\end{equation}
Denote by $(\alpha_i,\beta_i)=(0,1)$ or $(\alpha_i,\beta_i)=(1,0)$
a trivial zero or infinite generalized singular value.
Then the above form still holds with $u_i,v_i$ and $x_i$
being the left and right generalized singular vectors corresponding
to the zero or infinite generalized singular value.
From \eqref{Gsvd}, we have $X^T(A^TA+B^TB)X=I_n$.
Therefore, $X$ is $(A^TA+B^TB)$-orthogonal, and its columns
$x_i$'s are of $(A^TA+B^TB)$-norm unit length.
Naturally, we require that any approximation to $x_i$ have the
same length.

In this paper, we consider the following GSVD computational problem of
a large and possibly sparse regular matrix pair $(A,B)$.

\begin{problem}\label{probl}
For a given target $\tau>0$, label all the nontrivial
generalized singular values of $(A,B)$ as
\begin{equation}\label{order}
	|\sigma_1-\tau|\leq\dots\leq|\sigma_{\ell}-\tau|<
	|\sigma_{\ell+1}-\tau|\leq\dots\leq|\sigma_q-\tau|.
\end{equation}
We want to compute the GSVD components
$(\alpha_i,\beta_i,u_i,v_i,x_i)$, $i=1,\dots,\ell$ associated
with the $\ell$ generalized singular values
$\sigma_i$, $i=1,\dots,\ell$ of $(A,B)$ closest to $\tau$.
\end{problem}

If $\tau$ is inside the spectrum of the nontrivial generalized
singular values of $(A,B)$, then those $(\alpha_i, \beta_i, u_i,
v_i, x_i)$'s are called interior GSVD components of $(A,B)$;
if $\tau$ is close to one of the ends of the nontrivial generalized singular
spectrum, then they are called the extreme, i.e., largest or smallest, ones.
In the sequel, we assume that the target $\tau$ is
not equal to any generalized singular value of $(A,B)$.

Hochstenbach \cite{hochstenbach2009jacobi} proposes a
Jacobi--Davidson (JD) type GSVD method, called JDGSVD,
to compute several extreme or interior GSVD components of
$(A,B)$ where $B$ has full column rank.
At the subspace expansion phase, an
$(m+n)$-by-$(m+n)$ linear system, called the correction equation,
needs to be solved iteratively;
for analysis and details on the accuracy requirement on the
inner iterations of JD type methods for eigenvalue
and SVD problems, see \cite{huang2019inner,huang2022harmonic,
huang2023cross,jia2014inner,jia2015harmonic}, where it is shown
that it generally suffices to solve
correction equations with \emph{low or modest}
accuracy; that is, the relative errors of approximate solutions
lie in $[10^{-4},10^{-2}]$. More generally, the JDGSVD method formulates the
GSVD of $(A,B)$ either as the generalized eigendecomposition of the
augmented matrix pair $\left(\bsmallmatrix{&A\\A^T&},\bsmallmatrix{I&\\&B^TB}\right)$
for $B$ of full column rank or that of
$\left(\bsmallmatrix{&B\\B^T&},\bsmallmatrix{I&\\&A^TA}\right)$
for $A$ of full column rank, computes the corresponding
generalized eigenpairs, and then reconstructs the desired
approximate GSVD components from the relevant converged
eigenpairs. For the second formulation,  an
$(p+n)$-by-$(p+n)$ correction equation is solved iteratively
at each subspace expansion step. However, as has been theoretically
proven and numerically confirmed in
\cite{huang2021choices,huang2023cross},
a fairly ill conditioned $B$ or $A$ may make
the corresponding JDGSVD method numerically backward
unstable for the GSVD problem itself,
even if the relative residuals of approximate
eigenpairs of the underlying generalized
eigenvalue problem are already at the level of machine precision.

Zwaan and Hochstenbach \cite{zwaan2017generalized} present
two GSVD methods, called the generalized Davidson (GDGSVD)
and multidirectional (MDGSVD) methods, to compute several
extreme GSVD components of $(A,B)$.
The right searching subspace for the GDGSVD method is spanned
by the residuals of the generalized Davidson method
\cite[Sec.~11.2.4 and Sec.~11.3.6]{bai2000} applied to the
eigenvalue problem of the cross-product matrix pair
$(A^TA,B^TB)$; that for the MDGSVD method is first
expanded by two dimensions with the vectors formed by
premultiplying the best approximate right generalized
singular vector with $A^TA$ and $B^TB$,
and then truncated by one dimension so that an inferior
search direction is discarded.
The left searching subspaces for these two methods are
formed by premultiplying the right one with $A$ and $B$, respectively.
These two methods make use of the standard extraction approach
to compute the approximate GSVD components without
explicitly forming $A^TA$, $B^TB$.
Zwaan \cite{zwaan2019} utilizes the Kronecker canonical
form of a matrix pair \cite{stewart90}, and proves
that the GSVD of $(A,B)$ is equivalent to the generalized
eigendecomposition of a matrix pair with much larger order $2m+p+n$.
Though the pair does not involve cross-products or any
other matrix-matrix product, this formulation may be mainly
of theoretical value since (i) the nontrivial generalized
eigenvalues and eigenvectors of the larger structured matrix
pair come in quadruples and are always complex, (ii)
the conditioning of the structured generalized eigenvalue problem is unknown,
and (iii) it is extremely hard to propose a numerically backward stable
structure-preserving algorithm.

Adapted standard and harmonic Rayleigh--Ritz projections
\cite{golub2012matrix,parlett1998symmetric,saad2011,stewart2001matrix},
or called the standard and harmonic extraction approaches, for eigenvalue and
generalized eigenvalue problems to GSVD problems, the authors have
recently proposed the cross product-free (CPF) JDGSVD \cite{huang2023cross},
the CPF-harmonic and
inverse-free (IF) harmonic JDGSVD methods \cite{huang2022harmonic}, written as
CPF-JDGSVD, CPF-HJDGSVD and IF-HJDGSVD for short, to solve Problem~\ref{probl}.
At the subspace expansion step, each of these methods requires
an approximate solution of its own $n$-by-$n$ correction
equation, and uses it to expand the right searching subspace;
the two left searching subspaces are formed by
premultiplying the right one with $A$ and $B$, respectively.
The three methods are fundamentally different in the extraction phase.
The CPF-JDGSVD method works on $(A,B)$ directly, applies
the standard extraction approach to the left and right
searching subspaces, and computes the GSVD of a small
projection matrix pair. It implicitly realizes the
standard Rayleigh--Ritz projection of the generalized
eigenvalue problem of $(A^TA,B^TB)$ onto the right
searching subspace \cite{huang2023cross}.
With $B$ of full column rank, the CPF-HJDGSVD method
implicitly realizes the harmonic extraction approach
of the singular value decomposition (SVD)  problem of
$AL^{-T}$ onto the right and one of the left searching
subspaces, where $L^{T}\in\mathbb{R}^{n\times n}$ is the
factor in the sparse Cholesky factorization $B^TB=LL^T$.
At each extraction step, the method needs to solve the generalized
eigenvalue problem of a small symmetric positive definite
matrix pair. For a general and possibly rank deficient $B$, the
IF-HJDGSVD method implicitly carries out the harmonic
extraction of the generalized eigenvalue problem of
$(A^TA,B^TB)$ onto the right searching subspace, and
computes the generalized eigendecomposition of a small
symmetric positive definite matrix pair; it is the
inverses $(A^TA)^{-1}$ and $(B^TB)^{-1}$-free, and
works for a general matrix pair $(A,B)$.
For justifications and details, we refer the reader
to \cite{huang2022harmonic}

Just like those standard Rayleigh--Ritz methods for the matrix eigenvalue
problem and the SVD problem
\cite{cullum2002lanczos,hochstenbach2001jacobi,hochstenbach2009jacobi,simon2000lowrank,stoll2012krylov,zwaan2017generalized},
CPF-JDGSVD suits better for the computation of extreme generalized
singular values of $(A,B)$.
However, we deduce from, e.g., \cite{jia2004some, jia2003implicitly,jiastewart2001},
that the approximate generalized singular vectors obtained by
it may converge erratically or even fail to converge
even if the approximate generalized singular values converge.
These phenomena have been numerically observed in \cite{huang2023cross}.

The refined extraction approach or refined Rayleigh--Ritz
projection was initially proposed by the second author in
\cite{jia1997refined}, and has been intensively studied and developed
in, e.g., \cite{hochstenbach2004harmonic,
hochstenbach2008harmonic,jia1999,jia2005,
jia2015harmonic,jia2003implicitly,jia2010refined,jiastewart2001,
kokiopoulou2004computing,wu16primme,wu2015preconditioned}
for the eigenvalue and SVD problems.
For the large matrix eigenvalue problem, it
is systematically accounted for in the books
\cite{bai2000,stewart2001matrix,vandervorst}.
As is shown, the refined extraction has better convergence,
and fixes the erratic convergence
behavior and possible non-convergence of standard and harmonic
extractions \cite{jia1997refined,jia1999,jia2004some,jia2005,
jia2010refined,jiastewart2001}; also see, e.g., \cite{hochstenbach2004harmonic,
hochstenbach2008harmonic,kokiopoulou2004computing,wu16primme,wu2015preconditioned}.
Importantly, the basic convergence results in
\cite{jia2004some,jia2005,jiastewart2001} adapted to CPF-JDGSVD,
CPF-HJDGSVD and IF-HJDGSVD indicate that these three methods
inherit those mentioned convergence deficiencies of the standard and
harmonic extractions; that is, the three methods may work erratically
and inefficiently.

In this paper, in order to fix the deficiency of CPF-JDGSVD,
CPF-HJDGSVD and IF-HJDGSVD and to better solve Problem~\ref{probl},
we will propose three refined extraction-based JDGSVD methods.
We first present a refined JDGSVD (RCPF-JDGSVD) method. It
computes the approximate generalized singular values by the
standard extraction-based JDGSVD method but nontrivially adapts
the refined extraction of the generalized eigenvalue problem of
$(A^TA,B^TB)$ to the GSVD problem of $(A,B)$, and
seeks new approximate generalized
singular vectors that are generally more and can be much more
accurate than those obtained by the standard extraction.

Like the interior eigenvalue and SVD problems,
for the computation of interior GSVD components, the standard
extraction may produce spurious Ritz values
and has difficulty to pick up good Ritz
values, if any, correctly even if the searching subspaces are
sufficiently good
\cite{jia2015harmonic,jiastewart2001,stewart2001matrix,vandervorst},
causing that the CPF-JDGSVD method may converge slowly or even
fail to converge, as has been numerically confirmed in
\cite{huang2022harmonic}.
Whenever Ritz values are poor or, though good,
they are selected incorrectly, a refined extraction-based method
certainly delivers incorrect approximate GSVD components,
which severely affects the correct expansion of
underlying subspaces in JDGSVD type methods.
As a result, the refined extraction-based method may perform
poorly when computing interior GSVD components.
Just as the harmonic extraction-based methods for the eigenvalue
and SVD problems \cite{stewart2001matrix,vandervorst} that suit
better for computing interior eigenpairs and singular triplets
\cite{hochstenbach2004harmonic,hochstenbach2008harmonic,
	huang2019inner,jia2002refinedh,jia2015harmonic,jia2010refined,
	morgan1998harmonic,morgan2006harmonic},
our two harmonic extraction-based CPF-HJDGSVD and IF-HJDGSVD
methods are more suitable for computing interior generalized
singular values. Nevertheless, the harmonic Ritz vectors may have erratic
convergence behavior and may even fail to converge
because of spurious harmonic Ritz value(s) even if
searching subspaces are sufficiently accurate \cite{jia2005,jia2015harmonic}.

In order to overcome the aforementioned deficiency,
for the computation of interior GSVD components, on the basis of
CPF-HJDGSVD and IF-HJDGSVD, we propose refined harmonic
JDGSVD type methods that retain the merits of the harmonic
JDGSVD methods for computing generalized singular values
but seek more accurate approximate generalized singular vectors by
using the refined extraction in a proper way.
The resulting methods are abbreviated as RCPF-HJDGSVD
and RIF-HJDGSVD, respectively.

We first focus on the case $\ell=1$, and propose
basic refined and refined harmonic extraction-based
JDGSVD methods for the GSVD problem of interest.
Then combining the methods with appropriate restart,
deflation and purgation, we develop thick-restart RCPF-JDGSVD,
RCPF-HJDGSVD and RIF-HJDGSVD algorithms for Problem~\ref{probl}
with $\ell>1$, with details on
effective and efficient implementations described.
We will numerically demonstrate that they have better convergence
behavior and are considerably more efficient than the corresponding
standard and harmonic extraction-based JDGSVD algorithms.
We also illustrate that RCPF-JDGSVD performs better than
RCPF-HJDGSVD and RIF-HJDGSVD for extreme GSVD components,
but for interior GSVD problems the latter two ones are
preferable and RIF-HJDGSVD has wider applicability than RCPF-HJDGSVD.

The rest of this paper is organized as follows.
In Section~\ref{sec:2}, we review
the CPF-JDGSVD method in \cite{huang2023cross}
and the CPF-HJDGSVD and IF-HJDGSVD methods in \cite{huang2022harmonic}.
In Section~\ref{sec:3}, we introduce a refined extraction approach
for the GSVD computation; combining it with the standard,
CPF-harmonic and IF-harmonic extractions, we propose the refined,
refined CPF-harmonic and refined IF-harmonic extraction-based
JDGSVD methods: RCPF-JDGSVD, RCPF-HJDGSVD, and
RIF-HJDGSVD. In Section~\ref{sec:4}, we develop thick-restart
schemes of these three JDGSVD algorithms with effective deflation and
purgation for computing several GSVD components of $(A,B)$.
Numerical experiments are presented in Section~\ref{sec:6} to
illustrate the performance of the three refined JDGSVD algorithms
and to make comparisons of them and the
three standard and harmonic extraction-based JDGSVD algorithms.
Finally, we conclude the paper in Section~\ref{sec:7}.

\section{The standard and two harmonic extraction-based JDGSVD methods}\label{sec:2}
We review CPF-JDGSVD, CPF-HJDGSVD and IF-HJDGSVD in
\cite{huang2022harmonic,huang2023cross} for
computing $(\alpha_*,\beta_*,u_*,v_*,x_*): =
(\alpha_1,\beta_1,u_1,v_1,x_1)$. Section~\ref{subsec:2-1} is devoted to the
construction and expansion of the searching subspaces,
Section~\ref{subsec:2-2} reviews the standard extraction,
Section~\ref{subsec:2-3} is on the CPF-harmonic extraction,
and Section~\ref{subsec:2-4} describes the IF-harmonic extraction.

\subsection{The construction and expansion of
searching subspaces}\label{subsec:2-1}

Assume that a $k$-dimensional right searching subspace
$\X\subset\mathbb{R}^{n}$ is available, from which an approximation
to $x_*$ is sought. Then we construct the two left searching
subspaces
\begin{equation}\label{search}
	\UU=A \X
	\qquad\mbox{and}\qquad
	\V=B\X,
\end{equation}
from which approximations to $u_*$ and $v_*$ are extracted, respectively.
Theorem~2.1 of \cite{huang2023cross} shows that the distance
between $u_*$ and $\UU$ is as small as that between $x_*$
and $\X$ provided that $\alpha_*$ is not very small;
analogously, the distance between $v_*$ and $\V$ is as small
as that between $x_*$ and $\X$ if $\beta_*$ is not very small.
Therefore, for the GSVD components
corresponding to not very large or small generalized
singular values, the left searching subspaces $\UU$ and $\V$ constructed by
\eqref{search}  are as good as $\X$.

Let $\widetilde X\in\mathbb{R}^{n\times k}$ be
an orthonormal basis matrix of $\X$, and compute the thin QR factorizations
\begin{equation}\label{qrAXBX}
	A \widetilde X= \widetilde UR_{A}
	\qquad\mbox{and}\qquad
	B \widetilde X= \widetilde VR_{B}
\end{equation}
to obtain the orthonormal basis matrices
$\widetilde U\in\mathbb{R}^{m\times k}$ and
$\widetilde V\in\mathbb{R}^{p\times k}$
of $\UU$ and $\V$. With $\UU$, $\V$ and $\X$ as well as
their orthonormal bases available, we can use one of the following
six extraction approaches to compute an approximation to
the desired GSVD component $(\alpha_*,\beta_*,u_*,v_*,x_*)$:
(\romannumeral1) the standard extraction, as done in
the CPF-JDGSVD method \cite{huang2023cross} and will be
reviewed in Section~\ref{subsec:2-2};
(\romannumeral2) the CPF-harmonic extraction, as exploited
by the CPF-HJDGSVD method \cite{huang2022harmonic} and will
be sketched in Section~\ref{subsec:2-3};
(\romannumeral3) the IF-harmonic extraction, as adopted in the
IF-HJDGSVD method \cite{huang2022harmonic} and will be reviewed
in Section~\ref{subsec:2-4}; (\romannumeral4) the refined CPF
extraction; (\romannumeral5) the refined CPF-harmonic extraction;
(\romannumeral6) the refined IF-harmonic extraction.

In  Section~\ref{sec:3}, we shall propose the three extraction
approaches in (\romannumeral4)--(\romannumeral6).
Together with their extensions and restart schemes
for computing more than
one GSVD components that will be presented in Section~\ref{sec:4},
we will set up our complete algorithms, which constitute our
major contribution in this paper.

We temporarily denote by $(\tilde\alpha,\tilde\beta,\tilde u,
\tilde v,\tilde x)$ an approximation to the desired GSVD
component $(\alpha_*,\beta_*,u_*,v_*,x_*)$ computed
by any one of the six extraction approaches listed above,
where the positive scalar pair $(\tilde\alpha,\tilde\beta)$
is required to satisfy $\tilde\alpha^2+\tilde\beta^2=1$, and the
2-norm unit length vectors $\tilde u\in\UU$,
$\tilde v\in\V$ and the $(A^TA+B^TB)$-norm unit length
$\tilde x\in\X$ are required to satisfy $A\tilde x=\tilde\alpha \tilde u$
and $B\tilde x=\tilde\beta \tilde v$.
Therefore, for the GSVD problem of $(A,B)$, in terms of
\eqref{gsvdvector}, the GSVD residual of
$(\tilde\alpha, \tilde\beta, \tilde u, \tilde v,\tilde x)$ is
\begin{equation}\label{residual}
	r=r(\tilde\alpha,\tilde\beta,\tilde u,\tilde v,\tilde x)=
\tilde\beta A^T\tilde u-\tilde\alpha B^T\tilde v.
\end{equation}
Clearly, $(\tilde\alpha,\tilde\beta,\tilde u,
\tilde v,\tilde x)$ is an exact GSVD component of
$(A,B)$ if and only if $r=\bm{0}$. Let
$\textit{tol}>0$ be a user-prescribed stopping tolerance.  If
\begin{equation}\label{converge}
	\|r\|\leq (\tilde\beta\|A\|_1+\tilde\alpha\|B\|_1)\cdot \textit{tol},
\end{equation}
we stop the iterations and accept
$(\tilde\alpha,\tilde\beta,\tilde u,\tilde v,\tilde x)$
as a converged approximation to the desired
$(\alpha_*,\beta_*,u_*,v_*,x_*)$.
Throughout the paper, we denote by $\|\cdot\|$ and $\|\cdot\|_1$
the $2$- and $1$-norms of a matrix or vector, respectively.

If $(\tilde\alpha,\tilde\beta,\tilde u,
\tilde v,\tilde x)$ does not yet converge, a JDGSVD type
method first expands the right searching subspace $\X$, then
updates the left searching subspaces $\UU$ and $\V$ in the way
\eqref{search}. Specifically, notice that
\begin{equation}\label{defy}
	\tilde y=(A^TA+B^TB)\tilde x=\tilde\alpha A^T\tilde u+\tilde\beta B^T\tilde v
\end{equation}
satisfies $\tilde{y}^T\tilde x=1$. Therefore, $I-\tilde{y}\tilde{x}^T$
and $I-\tilde{x}\tilde{y}^T$ are oblique projectors. We
approximately solve the correction equation
\begin{equation}\label{cortau}
	(I-\tilde{y}\tilde{x}^T)(A^TA-\rho^2B^TB)(I-\tilde{x}\tilde{y}^T)t=-r
	\qquad\mbox{for}\qquad
	t\perp \tilde{y}
\end{equation}
using some Krylov subspace iterative method such as the
MINRES method \cite{saad2003}
with the fixed $\rho=\tau$ when the approximate GSVD
component $(\tilde\alpha,\tilde\beta,\tilde u,
\tilde v,\tilde x)$ is not yet reasonably good,
and then switch to solving problem \eqref{cortau} with the
adaptively changing $\rho=\tilde\theta:=\tilde\alpha/\tilde\beta$ if
\begin{equation}\label{fixtol}
	\|r\|\leq(\tilde\beta\|A\|_1+\tilde\alpha\|B\|_1)\cdot \textit{fixtol}
\end{equation}
for a user-prescribed tolerance $\textit{fixtol}>0$, say, $10^{-4}$.
Criterion \eqref{fixtol}
means that $(\tilde\alpha,\tilde\beta,\tilde u,
\tilde v,\tilde x)$ is already a
fairly good approximation to $(\alpha_*,\beta_*,u_*,v_*,x_*)$.

Iteratively solving the correction equations of form \eqref{cortau}
in the JDGSVD type methods is called the inner iterations, and
the extraction of approximate GSVD components with
respect to $\UU$, $\V$ and $\X$ is called the outer iterations.
It has been shown in \cite{huang2023cross} that solving the
correction equations with \emph{low or modest} accuracy generally
suffices to make the outer iterations of
the resulting inexact JD type GSVD algorithms well
mimic those of their exact counterparts where all the correction
equations are solved accurately.
Therefore, for the correction equations of form
\eqref{cortau} in the JDGSVD methods proposed in
\cite{huang2022harmonic,huang2023cross}
and in the new JDGSVD methods to be proposed in this paper, we adopt the
inner stopping criterion in \cite{huang2023cross},
and stop the inner iterations when the inner relative residual
norm $\|r_{in}\|$ of an approximate solution satisfies
\begin{equation}\label{inncov}
	\|r_{in}\|\leq\min\{2c\tilde\varepsilon,0.01\},
\end{equation}
where $\tilde\varepsilon\in[10^{-4},10^{-3}]$ is a user-prescribed
parameter and $c$ is a constant depending on the value of $\rho$
and all the approximate generalized singular values of $(A,B)$
computed by the underlying JDGSVD method during the
current outer iteration.

An approximate solution of \eqref{cortau}, still
denoted by $t$ for brevity, is utilized to expand $\X$ so as to
obtain the new $\X_{\new}={\rm span}\{\widetilde
X,t\}$, and the corresponding orthonormal basis matrix is
updated by
\begin{equation}\label{expandX}
	\widetilde X_{\mathrm{new}}=[\widetilde X,\  x_{+}]
	\qquad\mbox{with}\qquad
	x_{+}=\frac{(I-\widetilde{X}\widetilde{X}^T)  t}{\|
(I-\widetilde{X}\widetilde{X}^T) t\|},
\end{equation}
where $x_{+}$ is called an expansion vector.
Making use of \eqref{search} and \eqref{qrAXBX} gives rise to
the expanded left searching subspaces
$$
\UU_{\new}=A\X_{\new}=\mathrm{span}\{\widetilde U,A x_{+}\}
\mbox{\ \ and\ \ } \V_{\new}=B\X_{\new}=\mathrm{span}\{\widetilde V,B x_{+}\}.
$$
We obtain their orthonormal basis matrices
$\widetilde U_{\new}$ and $\widetilde
V_{\new}$ by updating the thin QR factorizations
\begin{eqnarray}
	A\widetilde X_{\new}&=&\widetilde U_{\new} \cdot R_{A,\new}=[\widetilde U, \tilde u_{+}]
\cdot \begin{bmatrix}R_A & r_A\\&\gamma_A\end{bmatrix}, \label{updateura}\\
	B\widetilde X_{\new}&=&\widetilde V_{\new} \hspace{0.05em} \cdot R_{B,\new}
=[\widetilde V,\hspace{0.1em} \tilde v_{+}] \cdot \begin{bmatrix}R_B & r_B\\&\gamma_B\end{bmatrix}, \label{updateurb}
\end{eqnarray}
where
\begin{eqnarray}
	r_A=\widetilde U^TAx_{+},\qquad
	\gamma_A=\|Ax_{+}-\widetilde Ur_A\|,\qquad
	\tilde u_{+}=\frac{Ax_{+}-\widetilde Ur_A}{\gamma_A}, \label{updatera} \\
	r_B=\widetilde V^TBx_{+},\qquad
	\gamma_B=\|Bx_{+}-\widetilde Vr_B\|,\qquad
	\tilde v_{+}=\frac{Bx_{+}-\widetilde Vr_B}{\gamma_A}. \label{updaterb}
\end{eqnarray}

We then compute a new and hopefully better approximation
$(\tilde\alpha,\tilde\beta,\tilde u, \tilde v,\tilde x)$ with respect to $\X_{\rm new}$ and $\UU_{\rm new}$,
$\V_{\rm new}$, and repeat the above process until
convergence occurs.

\subsection{The standard extraction approach}\label{subsec:2-2}
Given $k$-dimensional right and left searching subspaces $\X$ and $\UU$,
$\V$ of form \eqref{search}, the standard extraction
approach finds nonnegative pairs
$(\tilde\alpha,\tilde\beta)$ with $\tilde\alpha^2+\tilde\beta^2=1$,
unit length $\tilde u\in\UU$ and $\tilde v\in\V$,
and $(A^TA+B^TB)$-norm unit length $\tilde x\in\X$
satisfying the conditions
\begin{equation}\label{sjdgsvd}
	\left\{\begin{aligned}
		&A\tilde x=\tilde\alpha\tilde u,\\
		&B\tilde x=\tilde\beta\tilde v,\\
		&\tilde\beta A^T\tilde u-\tilde\alpha B^T\tilde v \perp\X.
	\end{aligned}\right.
\end{equation}
Write $\tilde\theta=\tilde\alpha/\tilde\beta$.
It is straightforward to justify that
$$
(A^TA-\tilde\theta^2 B^TB)\tilde x\perp\X,
$$
which is exactly the standard Rayleigh--Ritz projection
of the generalized eigenvalue problem of the
matrix pair $(A^TA,B^TB)$ onto the subspace $\X$, and each of the
$k$ pairs $(\tilde\theta^2,\tilde x)$ is a Ritz approximation.
Therefore, we call
$(\tilde\alpha,\tilde\beta,\tilde u,\tilde v,\tilde x)$ a Ritz approximation to $(\alpha_*,\beta_*,u_*,v_*,x_*)$ with
$(\tilde\alpha,\tilde\beta)$ or
$\tilde\theta=\frac{\tilde\alpha}{\tilde\beta}$ the
Ritz value and $\tilde u$, $\tilde v$
and $\tilde x$ the left and right Ritz vectors of $(A,B)$
with respect to the left and right subspaces,
respectively.

It is known from \eqref{qrAXBX} that the projection matrices
$\widetilde U^TA\widetilde X=R_A$ and $\widetilde V^TA\widetilde X=R_B$.
Write $\tilde u=\widetilde U\tilde e$, $\tilde v=\widetilde V\tilde f$
and $\tilde x=\widetilde X\tilde d$.
Then (\ref{sjdgsvd}) reduces to
\begin{equation}\label{sab}
			R_A\tilde d=\tilde\alpha\tilde e,\qquad
			R_B\tilde d=\tilde\beta\tilde f ,\qquad
			\tilde\beta R_A^T\widetilde e=\tilde\alpha R_B^T\tilde f,
	\end{equation}
which is the vector form of GSVD of $(R_A,R_B)$.
Therefore, in the extraction phase, the standard extraction-based
CPF-JDGSVD method computes the GSVD of the $k$-by-$k$ matrix pair
$(R_A,R_B)$, picks up the GSVD component
$(\tilde\alpha,\tilde\beta,\tilde e,\tilde f,\tilde d)$
corresponding to the generalized singular value $\tilde\theta=\frac{\tilde\alpha}{\tilde\beta}$
closest to the target $\tau$, and takes
\begin{equation*}
	(\tilde\alpha,\tilde\beta,\tilde u,\tilde v,\tilde x)=
	(\tilde\alpha,\tilde\beta,\widetilde U\tilde e,\widetilde V\tilde f,\widetilde X\tilde d)
\end{equation*}
as an approximation to the desired GSVD component $(\alpha_*,\beta_*,u_*,v_*,x_*)$ of $(A,B)$.

\subsection{The CPF-harmonic extraction approach}\label{subsec:2-3}
For $B$ of full column rank, let
$B^TB=LL^T$ be the Cholesky factorization of $B^TB$.
It is proven in \cite{huang2022harmonic} that $(\sigma_*,u_*,z_*)$ with
$z_*=\frac{1}{\beta_*}L^Tx_{*}$ is a singular triplet of the matrix
\begin{equation}\label{deftildeA}
	\check{A}=AL^{-T}.
\end{equation}
Take the $k$-dimensional $\UU$ and $\Z=L^T\X$ as the left and
right searching subspaces for the left and right singular vectors
$u_*$ and $z_*$ of $\check A$, respectively. We note
that $\widetilde{Z}=L^TX$ is a basis matrix of $\Z=L^T\X$.
Then the CPF-harmonic extraction \cite{huang2022harmonic}
finds positive scalars $\phi>0$
and vectors $\check u\in\UU$ and $\check z\in\Z$ such that
\begin{equation}\label{cpfharmonic}
\begin{bmatrix}
	0 &\check A^T\\\check A& 0
\end{bmatrix}
\begin{bmatrix}
	\check z\\\check u
\end{bmatrix}
-\phi
\begin{bmatrix}
	\check z\\\check u
\end{bmatrix}
\ \perp\
\left(\begin{bmatrix}
	0 &\check A^T\\\check A& 0
\end{bmatrix}
-\tau I\right) \cdot
\mathcal{R}\left(\begin{bmatrix}
	\widetilde Z&\\& \widetilde  U
\end{bmatrix}\right),
\end{equation}
where $\mathcal{R}(\cdot)$ denotes the range space of a matrix.
This is the harmonic extraction approach
for the eigenvalue problem of the augmented matrix
$\bsmallmatrix{ &\check A^T\\\check A& }$
with respect to the searching subspace
$\mathcal{R}\left(\bsmallmatrix{\widetilde Z&\\&\widetilde U} \right)$
and the given target $\tau>0$;
see \cite{stewart2001matrix,vandervorst}.

Write $\check z=\widetilde Z\check d$ and $\check u=\widetilde U\check e$.
It is shown in \cite{huang2022harmonic} that \eqref{cpfharmonic} amounts to the
following symmetric generalized eigenvalue problem:
\begin{equation}\label{cpfeq20}
\begin{bmatrix}
	R_A^TR_A\!+\!\tau^2R_B^TR_B\!\!\!
	& -2\tau R_A^T
	\\-2\tau R_A
	&\!\!\! \widetilde U^T\!\!A(B^T\!B)^{-1}\!\!A^T\widetilde U\!+\!\tau^2I
\end{bmatrix}\!\!
\begin{bmatrix} \check d\\\check e\end{bmatrix}
=(\phi\!-\!\tau)\!\!
\begin{bmatrix}
	-\tau R_B^TR_B\!\!\! &R_A^T\\R_A &\!\!\!-\tau I
\end{bmatrix}\!\!
\begin{bmatrix}\check d\\\check e\end{bmatrix}.
\end{equation}
Denote by $H_{\mathrm{c}}$ and $G_{\mathrm{c}}$ the $2k\times 2k$
symmetric matrices in the
left and right hand sides of the above equation, respectively.
Computationally, suppose that $(B^TB)^{-1}=(LL^T)^{-1}$ can be
efficiently applied to obtain $H_{\mathrm{c}}$.
The CPF-harmonic extraction approach computes the
generalized eigendecomposition of the symmetric positive definite
matrix pair $(G_{\mathrm{c}},H_{\mathrm{c}})$, picks up the largest
generalized eigenvalue $\nu$ in magnitude and the corresponding
eigenvector $\bsmallmatrix{\check d\\\check e}$, and
takes
\begin{equation}\label{cpfh}
(\phi,\check u,\check z)=\left(\tau+\frac{1}{\mu},\frac{\widetilde U\check
	e}{\|\check e\|},\frac{\widetilde Z\check d}{\|\widetilde Z\check d\|}\right)
\end{equation}
as an approximation to the singular triplet $(\sigma_*,u_*,z_*)$ of $\check A$.

Note that the exact right generalized singular vector
$x_*=\beta_*L^{-T}z_*$.
Therefore, we take $L^{-T}\check z=L^{-T}\widetilde Z\check d
=\widetilde X\check d$ as the approximation to $x_*$ in direction.
Concretely, we take the approximate right generalized singular vector
$\check x$ of $(A,B)$ as
\begin{equation}\label{cpfx}
\check x=\frac{1}{\check\delta}\widetilde X\check d
\qquad\mbox{with}\qquad
\check\delta=\sqrt{\|\check e\|^2+\|\check f\|^2},
\end{equation}
where $\check e$ is recomputed by $\check e=R_A\check d$ and $\check
f=R_B\check d$. With such $\check\delta$, the approximate $\check x$ is of
$(A^TA+B^TB)$-norm unit length \cite{huang2022harmonic}.
We then take the new approximate
generalized singular value and left generalized singular vectors as \begin{equation}\label{checksigmauv}
\check\alpha=\frac{\|\check e\|}{\check\delta},\qquad
\check\beta=\frac{\|\check f\|}{\check\delta}
\qquad\mbox{and}\qquad
\check u=\frac{\widetilde U\check e}{\|\check e\|},\qquad
\check v=\frac{\widetilde V\check f}{\|\check f\|},
\end{equation}
which are called the CPF-harmonic Ritz approximations and satisfy $
A\check x=\check \alpha \check u$  and $
B\check x=\check\beta\check v
$
with
$\check\alpha^2+\check\beta^2=\|\check u\|=\|\check v\|=1$.
Moreover, it is known from \cite{huang2022harmonic} that the new  $\check\theta=\frac{\check\alpha}{\check\beta}$
is a better approximation to $\sigma_*$ than
$\phi$ in \eqref{cpfh} in the sense that
\begin{equation}\label{rayqu}
\|(A^TA-\check\theta^2B^TB)\check x\|_{(B^TB)^{-1}}
\leq\|(A^TA-\phi^2B^TB)\check x\|_{(B^TB)^{-1}},
\end{equation}
where $\|\cdot\|_M$ is the $M$-norm for a symmetric positive definite matrix $M$.

\subsection{The IF-harmonic extraction approach}\label{subsec:2-4}
For a general and possibly rank deficient $B$,
the CPF-harmonic extraction
does not work. Alternatively, the IF-harmonic
extraction \cite{huang2022harmonic} is for more general purpose. It
finds approximate generalized singular
values $\varphi>0$ and approximate right generalized singular
vectors $\hat x\in\X$ with $\|\hat x\|_{A^TA+B^TB}=1$ such that
\begin{equation}\label{ifharmonic}
(A^TA-\varphi^2B^TB)\hat x\ \perp\ (A^TA-\tau^2B^TB)\X.
\end{equation}
This is precisely the harmonic Rayleigh--Ritz projection on the generalized eigenvalue problem of
$(A^TA,B^TB)$ with respect to the subspace $\X$ and the given
target $\tau^2$.

Write $\hat x=\frac{1}{\hat\delta}\widetilde X\hat d$
with $\|\hat d\|=1$ and $\hat\delta$
a normalizing parameter to be determined.
Then requirement \eqref{ifharmonic} is equivalent to
\begin{equation}\label{harmoniceq}
\widetilde {X}^T(A^TA-\tau^2B^TB)^2\widetilde X\hat d
=(\varphi^2-\tau^2)\widetilde{X}^T(A^TA-\tau^2B^TB)
B^TB\widetilde X\hat d.
\end{equation}
Denote by $H_{\tau}$ and
$G_{\tau}$ the $k$-by-$k$ matrices in the left and right hand sides
of the above equation, respectively. Then $(\varphi^2-\tau^2)$ is a generalized
eigenvalue of the matrix pair $(H_{\tau},G_{\tau})$ with $\hat d$
the corresponding unit length generalized eigenvector.
The IF-harmonic extraction approach computes  the generalized
eigendecomposition of $(G_{\tau},H_{\tau})$,
and picks up the generalized eigenpair $(\nu,\hat d)$ corresponding
to $\varphi=\sqrt{\tau^2+\frac{1}{\nu}}$ closest to $\tau$ among all the
eigenpairs of $(G_{\tau},H_{\tau})$.
Then $\varphi$ is the IF-harmonic Ritz value that approximates
the desired $\sigma_*$ and
\begin{equation}\label{hatx}
\hat x=\frac{1}{\hat\delta}\widetilde X\hat d
\qquad\mbox{with}\qquad
\hat\delta=\sqrt{\|\hat e\|^2+\|\hat f\|^2}
\end{equation}
is the corresponding right IF-harmonic Ritz vector that approximates
the desired $x_*$, where $\hat e=R_A\hat d$ and $\hat f=R_B\hat d$. Such
$\hat\delta$ guarantees that $\hat x$ is of $(A^TA+B^TB)$-norm unit length.
The corresponding IF-harmonic Ritz values and left IF-harmonic Ritz vectors are computed by
\begin{equation}\label{hatsigmauv}
\hat\alpha=\frac{\|\hat e\|}{\hat \delta},\qquad
\hat\beta=\frac{\|\hat f\|}{\hat \delta}
\qquad\mbox{and}\qquad
\hat u=\frac{\widetilde U\hat e}{\|\hat e\|},\qquad
\hat v=\frac{\widetilde V\hat f}{\|\hat f\|}.\qquad
\end{equation}

It is known from \cite{huang2022harmonic} that the IF-harmonic Ritz
approximation $(\hat\alpha,\hat\beta,\hat u,\hat v,\hat x)$
satisfies $A\hat x=\hat\alpha\hat u$ and $
B\hat x=\hat\beta\hat v$
with
$\hat \alpha^2+\hat\beta^2=\|\hat u\|=\|\hat v\|=1$.
The IF-harmonic approximate generalized singular value
$\hat\theta=\frac{\hat\alpha}{\hat\beta}$ is better than the above $\varphi$
in the sense of \eqref{rayqu} where $\phi$, $\check\theta$ and $\check x$ are
replaced by $\varphi$, $\hat\theta$ and $\hat x$, respectively.

\section{The refined JDGSVD type methods}\label{sec:3}
For an approximation to the desired GSVD component
$(\alpha_*,\beta_*,u_*,v_*,x_*)$ obtained by the standard or
harmonic extraction approaches described in Section~\ref{sec:2},
in this section we will unify the notation and denote the approximation
by $(\alpha,\beta,u,v,x)$ with $\theta=\frac{\alpha}{\beta}$.
We now propose a refined extraction approach:
Find a unit length vector $ x_r\in\X$ satisfying the optimal requirement
\begin{equation}\label{refined}
	\|(A^TA-\theta^2B^TB)x_r\| =
	\min_{w\in\X,\|w\|=1}\|(A^TA-\theta^2B^TB)w\|,
\end{equation}
rescale $x_r$ to $\bar x$ with $\|\bar x\|_{A^TA+B^TB}=1$, and use
$\bar x$ to approximate $x_*$. We call $\bar x$ a refined
approximate right generalized singular vector, or simply the refined or
refined harmonic right Ritz vector, of $(A,B)$ over the subspace $\X$
if $\theta$ is a Ritz value or harmonic Ritz value obtained by the standard
or harmonic extraction approaches.

Jia \cite{jia2004some} has proven that if
$(\theta^2,\bar{x})$ is not an exact eigenpair of $(A^TA,B^TB)$
then
$\|(A^TA-\theta^2B^TB)\bar x\|<\|(A^TA-\theta^2B^TB)x\|$ strictly;
moreover, if there is another standard or harmonic
Ritz value close to $\theta$, then
$$
\|(A^TA-\theta^2B^TB)\bar x\|\ll\|(A^TA-\theta^2B^TB)x\|
$$
generally holds,
meaning that $\bar x$ may be much more accurate
than $x$ as an approximation to $x_{*}$. Most importantly,
the fundamental convergence results in \cite{jia2005,jiastewart2001}
indicate that there is a Ritz value $\theta\rightarrow \sigma_*$
unconditionally and
the convergence of the refined Ritz or harmonic Ritz vector
$\bar{x}\rightarrow x_*$ is guaranteed too
once the distance between $x_*$ and $\X$ tends to zero, while
the Ritz or (CPF or IF-)harmonic Ritz vector $x$ may
converge erratically and even may fail to converge even if
$\X$ contains sufficiently accurate approximations to $x_*$.

We now consider the accurate and efficient computation of $\bar{x}$.
For an arbitrary unit length vector $w\in\X$, write
$w=\widetilde Xd$ with $\|d\|=1$. Then $x_r=\widetilde X\bar d$
with $\|\bar d\|=1$, and the minimization problem
\eqref{refined} is equivalent to
\begin{equation}\label{mineqmin}
	\|(A^TA-\theta^2B^TB)\widetilde X\bar d\|=
	\min_{d\in\mathbb{R}^{k}, \|d\|=1}\|(A^TA-\theta^2B^TB)\widetilde Xd\|.
\end{equation}
Therefore, $\bar d$ is the right singular vector of
$G_{\theta}=(A^TA-\theta^2B^TB)\widetilde X$ corresponding to
its smallest singular value, and it is also the eigenvector
of the cross-product matrix
\begin{equation}\label{defGr}
	H_{\theta}=G_{\theta}^TG_{\theta}
	=\widetilde{X}^T(A^TA-\theta^2B^TB)^2\widetilde X
\end{equation}
associated with its smallest eigenvalue.
Jia \cite{jia2006using} has proposed and developed a variant of
the cross product-based QR algorithm for the accurate SVD
computation of a general matrix. In our specific context, we only need to
use the standard QR algorithm to obtain $\bar{d}$.
Remarkably, Jia \cite{jia2006using} has shown that
the cross product-based QR algorithm
is much more efficient than the standard
Golub--Kahan and Chan SVD
algorithms \cite{golub2012matrix,stewart2001matrix} applied
to $G_{\theta}$ for $n\gg k$.

More precisely, in finite precision
arithmetic, Jia \cite{jia2006using} has proven
that, provided that the smallest singular value of
$G_{\theta}$ is well separated from its second smallest one
then the computed $\bar d$'s by the cross product-based QR algorithm
and the Golub--Kahan or Chan SVD algorithm
essentially have the same accuracy; when the computed smallest
singular value of $G_{\theta}$ is taken as the square root
of the Rayleigh quotient $x^TH_{\theta}x$ that is calculated
by the formula $(G_{\theta}\bar d)^T(G_{\theta}\bar d)$
with $\bar d$ the {\em computed} eigenvector of $H_{\theta}$,
it has the same accuracy as the smallest singular value computed by the
standard Golub--Kahan or Chan SVD algorithm applied to $G_{\theta}$;
see \cite{jia2006using} for a detailed analysis and comparison.
We should particularly remind that in our context the smallest
singular value of $G_{\theta}$ tends to zero as
$\theta\rightarrow \sigma_*$ but the second smallest one of
$G_{\theta}$ is typically not small and thus is well separated
from the smallest one.

In view of the above, rather than computing the SVD of $G_{\theta}$
at expensive cost,
we compute the eigendecomposition of $H_{\theta}$ cheaply, and pick up
the desired eigenvector $\bar d$.
We remark that, as the subspaces are expanded, we can efficiently form
$H_{\theta}$ by
\begin{equation}\label{comGr}
	H_{\theta}=H_A+\theta^4H_B-\theta^2(H_{A,B}^T+H_{A,B}),
\end{equation}
where the intermediate matrices
\begin{equation}\label{defHAB}
H_A= \widetilde{X}^T(A^TA)^2\widetilde X,\qquad
H_B= \widetilde{X}^T(B^TB)^2\widetilde X,\qquad
H_{A,B}=\widetilde{X}^TA^TAB^TB\widetilde X
\end{equation}
can be efficiently updated at each step and they are also used to
efficiently form the projected matrices $G_{\tau}=H_{A,B}-\tau^2H_B$ and
$H_{\tau}=H_A+\tau^4H_B-\tau^2(H_{A,B}^T+H_{A,B})$
involved in the IF-harmonic extraction approach;
see \eqref{harmoniceq}. Also, it is important to notice
that when $\ell$ GSVD components are required, which will
be considered in the next section, we can efficiently
form all the cross-product
matrices $H_{\theta}$ in \eqref{comGr} for different $\theta$'s.

By definition, we need to rescale $x_r$ in
\eqref{refined} to obtain the refined or refined harmonic
Ritz vector $\bar x$ with $\|\bar  x\|_{A^TA+B^TB}=1$.
Write $$\bar x=\frac{1}{\bar\delta}\widetilde X\bar d,$$ where $\bar\delta$
is a normalizing factor to be determined.
Following the same derivations as in Section~\ref{sec:2},
we have
$$\bar\delta=\sqrt{\|\bar e\|^2+\|\bar f\|}
\qquad
\mbox{with}\qquad
\bar e=R_A\bar d
\quad\mbox{and}\quad\bar f=R_B\bar d,$$
where $R_A$ and $R_B$ are defined in \eqref{qrAXBX}.
Analogously to those done in the CPF- and IF-harmonic
extraction approaches, we compute the refined or refined harmonic Ritz values
and the two refined or refined harmonic left Ritz vectors by
\begin{equation}\label{refinegsvd}
	\bar\alpha=\frac{\|\bar e\|}{\bar\delta},\qquad
	\bar\beta=\frac{\|\bar f\|}{\bar\delta}
\qquad \mbox{and}\qquad
	\bar u=\frac{\widetilde U\bar e}{\|\bar e\|},\qquad
	\bar v=\frac{\widetilde V\bar f}{\|\bar f\|}.
\end{equation}
It is easily verified that
$$
A\bar x=\bar\alpha\bar u \qquad\mbox{and}\qquad
B\bar x=\bar\beta\bar v
$$
with $\bar\alpha^2+\bar\beta^2=\|\bar u\|=\|\bar v\|=1$.
The quintuple $(\bar\alpha,\bar\beta,\bar u,\bar v,\bar x)$
is called a refined or refined CPF-harmonic or refined
IF-harmonic approximation to the desired GSVD component
$(\alpha_*,\beta_*,u_*,v_*,x_*)$ of $(A,B)$, depending on by which
extraction approach $\theta$ in \eqref{refined} is computed:
the standard extraction, the CPF-harmonic or IF-harmonic
extraction. Particularly, $(\bar\alpha,\bar\beta)$ or
$\bar\theta=\frac{\bar\alpha}{\bar\beta}$ is called the
refined, refined CPF-harmonic, or refined IF-harmonic
Ritz value and, correspondingly, $\bar u$, $\bar v$ and
$\bar x$ are called the refined, refined CPF-harmonic or
IF-harmonic left and right Ritz vectors of $(A,B)$.

Combining the refined, refined CPF-harmonic and refined
IF-harmonic extraction approaches with the subspace
expansion approach described in Section~\ref{subsec:2-1},
we have now proposed the refined CPF, refined CPF-harmonic and refined
IF-harmonic JDGSVD methods, i.e., RCPF-JDGSVD,
RCPF-HJDGSVD and RIF-HJDGSVD, respectively.
For each of these methods, the corresponding residual and
stopping criterion are the same as \eqref{residual} and \eqref{converge}.

\section{Thick-restart refined JDGSVD algorithms with deflation and purgation} \label{sec:4}
In this section,  by introducing appropriate deflation and
purgation techniques, we develop practical thick-restart refined JDGSVD
algorithms for solving Problem~\ref{probl}.

\subsection{Thick-restart}\label{subsec:5-1}
As the dimension $k$ of searching subspaces increases,
the storage requirements and computational costs
of the previous basic JDGSVD type algorithms become unaffordable.
When $k$ reaches the maximum number $k_{\max}$ allowed but
the algorithms do not yet converge, it is necessary to
restart them. To this end, we adopt the thick-restart technique, which was
first advocated in \cite{stath1998} for the eigenvalue problem
and has been nontrivially extended to the SVD and GSVD problems
in \cite{huang2019inner,huang2022harmonic,huang2023cross,
stath1998,wu16primme,wu2015preconditioned}.
Specifically, the thick-restart
takes certain $k_{\min}$-dimensional subspaces
of the current left and right searching subspaces as the initial
left and right ones for the next cycle, so that they retain as
much information as possible on the desired and $k_{\min}-1$
nearby GSVD components of $(A,B)$.  We then expand them step
by step in the way described in Section~\ref{sec:2},
compute new approximate GSVD components with respect to
the expanded subspaces at each
step, and check the convergence.
If converged, we stop; otherwise, we repeat the same
process until the dimension of expanded subspaces reaches $k_{\max}$.

We will present an efficient and stable computational procedure
for the thick-restart.
For each of RCPF-JDGSVD, RCPF-HJDGSVD and RIF-HJDGSVD,
when $k=k_{\max}$ but the method does not yet converge,
we compute $k_{\min}$ approximate
right generalized singular vectors $\bar x_i=\widetilde X\bar d_i$,
$i=1,\dots,k_{\min}$ associated with the
$k_{\min}$ approximate singular values closest to
$\tau$, where $\bar x_1$ is selected as the
approximation to the desired $x_*$.
Then the thick-restart takes the new initial right searching subspace
$\X_{\new}=\spans\{\bar x_1,\dots,\bar x_{\min}\}$.

Denote $D_1=[\bar d_1,\dots,\bar d_{k_{\min}}]$.
Next we show how to efficiently obtain the $k_{\min}$-dimensional
new right and left subspaces $\mathcal{X}_{\rm new}$,
$\mathcal{U}_{\rm new}$ and $\mathcal{V}_{\rm new}$ as well as
their orthonormal bases, whose computational details
were not described in \cite{huang2022harmonic,huang2023cross}.

Compute the thin QR factorization $D_1=Q_dR_d$
using $\mathcal{O}(k_{\max}k_{\min}^2)$ flops.
Then
\begin{equation}\label{xnew}
	 \widetilde{X}_{\new}=\widetilde XQ_d
\end{equation}
is the orthonormal basis matrix of $\X_{\new}$, whose computation
costs $2nk_{\max}k_{\min}$ flops. As for the new
$\UU_{\new}=A\X_{\new}$ and $\V_{\new}=B\X_{\new}$, we compute the
thin QR factorizations of the small sized matrices
\begin{equation}\label{qrsmall}
	R_AQ_d=Q_eR_{A,\new}
	\qquad \mbox{and} \qquad
	R_BQ_d=Q_fR_{B,\new}
\end{equation}
using $\mathcal{O}(k_{\max}^2k_{\min})$ flops,
where $R_A$, $R_B$ are defined as in \eqref{qrAXBX}.
Exploiting \eqref{qrAXBX} and \eqref{xnew}--\eqref{qrsmall},
we obtain the thin QR factorizations
\begin{eqnarray*}
	 &&\hspace{0.1em} A\widetilde{X}_{\new} = \hspace{0.1em}A
\widetilde{X}Q_d=\hspace{0.1em}\widetilde UR_AQ_d=(\widetilde UQ_e)R_{A,\new},\\
	 &&B\widetilde{X}_{\new} = B\widetilde{X}Q_d=\widetilde VR_BQ_d=(\widetilde VQ_f)R_{B,\new},
\end{eqnarray*}
showing that the columns of
\begin{equation*}
	\widetilde U_{\new}=\widetilde UQ_e
	\qquad\mbox{and}\qquad
	\widetilde V_{\new}=\widetilde VQ_f
\end{equation*}
form orthonormal bases of $\UU_{\new}$ and $\V_{\new}$, respectively,
whose computation costs $2(m+p)k_{\max}k_{\min}$ flops.
Together with the computation of  $\widetilde{X}_{\new}$,
the above whole process approximately costs $2(m+n+p)k_{\max}k_{\min}$ flops
since $m,p,n\gg k_{\max}>k_{\min}$.

For the intermediate matrices in \eqref{defHAB} used to form
$H_{\theta}$ defined by \eqref{comGr}, as
has been done in the IF-harmonic JDGSVD algorithm, we efficiently
update them by
\begin{equation}\label{inter}
	H_{A,\new}=Q_d^TH_AQ_d,\qquad
	H_{B,\new}=Q_d^TH_BQ_d,\qquad
	H_{A,B,\new}=Q_d^TH_{A,B}Q_d
\end{equation}
at cost of $\mathcal{O}(k_{\max}^2k_{\min})$ flops.

Summarizing the above,
the construction cost of orthonormal bases of the $k_{\min}$-dimensional
$\UU_{\new}$, $\V_{\new}$, $\X_{\new}$ and the
matrices in \eqref{inter} is approximately $2(m+p+n)k_{\max}k_{\min}$
flops. Typically, in computation,
one takes $k_{\max}=20\sim 30$ and $k_{\min}=3\sim 5$; see
\cite{huang2022harmonic,huang2023cross}. Therefore, forming
the restarting initial left and right searching
subspaces in the thick-restart is very cheap.

\subsection{Deflation and purgation}\label{subsec:5-2}
We can adapt the effective and
efficient deflation techniques proposed in
\cite{huang2022harmonic,huang2023cross} to the thick-restart
RCPF-JDGSVD, RCPF-HJDGSVD and RIF-HJDGSVD. We will
elaborate some key elements of subspaces during deflation
that were not given adequate and very clear arguments in
\cite{huang2022harmonic,huang2023cross},
which turn out to play a crucial role in both mathematics
and effective and efficient implementations of JDGSVD type
algorithms for computing more than one GSVD components.

Suppose that one of the RCPF-JDGSVD, RCPF-HJDGSVD and RIF-HJDGSVD algorithms
has computed $j(<\ell)$ converged approximations
$(\alpha_{i,c},\beta_{i,c},u_{i,c},v_{i,c},x_{i,c})$ to
$(\alpha_i,\beta_i,u_i,v_i,x_i)$, whose residual norms satisfy
\begin{equation}\label{stopcrit}
	\|r_i\|=\|\beta_{i,c} A^Tu_{i,c}\!-\!\alpha_{i,c}B^Tv_{i,c}\|
	\leq(\beta_{i,c}\|A\|_1\!+\!\alpha_{i,c}\|B\|_1)\cdot tol, \quad i=1,\dots,j.
\end{equation}
Denote
\begin{eqnarray}
&&U_c\hspace{0.1em}=[u_{1,c}\hspace{0.01em},\dots, u_{j,c}\hspace{0.01em}], \qquad\quad
C_c=\diag\{\alpha_{1,c},\dots,\alpha_{j,c}\},\nonumber\\
&&V_c\hspace{0.2em}=[v_{1,c}\hspace{0.11em},\dots, v_{j,c}\hspace{0.11em}],\qquad\quad
S_c\hspace{0.07em}=\diag\{\beta_{1,c}\hspace{0.11em},\dots,\beta_{j,c}
\hspace{0.11em}\},\nonumber\\
&&X_c=[x_{1,c},\dots, x_{j,c}],\qquad\quad
Y_c\hspace{0.09em}=(A^TA+B^TB)X_c. \label{convx}
\end{eqnarray}
Then
$$
AX_c=U_cC_c,\quad BX_c=V_cS_c,\quad C_c^2+S_c^2=I_j,\quad
Y_c=A^TU_cC_c+B^TV_cS_c,
$$
and the  F-norm of the residual matrix satisfies
\begin{equation*}
	\|R_c\|_F=\|A^TU_cS_c-B^TV_cC_c\|_F\leq\sqrt{j(\|A\|_1^2+\|B\|_1^2)}\cdot \textit{tol}.
\end{equation*}

Suppose that the current $X_c$ and $Y_c$ are bi-orthogonal, i.e.,
$Y_c^TX_c=I_j$. We point out that this bi-orthogonality is
fulfilled in our six JDGSVD type algorithms.
It is known from Proposition~4.1 of \cite{huang2023cross} that
$(\alpha_i,\beta_i,u_i,v_i,x_i),\ i=j+1,\ldots,q$ are the exact GSVD
components of the deflated matrix pair
\begin{equation}\label{defpair}
	(A(I-X_cY_c^T),B(I-X_cY_c^T))
\end{equation}
restricted to the range space of the oblique projector $(I-X_cY_c^T)$
if $\textit{tol}=0$ in (\ref{stopcrit}).
Therefore, we can apply any one of RCPF-JDGSVD, RCPF-HJDGSVD and RIF-HJDGSVD
to the deflated matrix pair in \eqref{defpair} to compute
the next desired GSVD component
$(\alpha_{*},\beta_{*},u_{*},v_{*},x_{*}):=
(\alpha_{j+1},\beta_{j+1},u_{j+1},v_{j+1},x_{j+1})$ of $(A,B)$.

Remarkably, when
the converged $(\alpha_{j,c},\beta_{j,c},u_{j,c},v_{j,c},x_{j,c})$
has been found, the current
subspaces usually contain reasonably rich
information on $u_{*},v_{*},x_{*}$.
To make full use of such available information,
we present an effective and efficient purgation
technique in the thick-restart
RCPF-JDGSVD, RCPF-HJDGSVD and RIF-HJDGSVD algorithms.
Instead of constructing initial searching
subspaces from scratch, we purge the newly converged
$$
x_{j,c}:=x=\widetilde Xd
$$
from the current $\X$, and take the reduced subspace,
denoted by $\X_{\rm new}$, as an initial right searching subspace when
extracting an approximation to $(\alpha_{*},\beta_{*},u_{*},v_{*},x_{*})$.

Concretely, denote
$$
\widetilde{X}_{\new}=\widetilde XQ_D
$$
with some orthonormal matrix $Q_D\in\mathbb{R}^{k\times(k-1)}$ to be
determined.
We require that $\widetilde X_{\new}$ be
orthogonal to $Y_{c,\new}=[Y_{c},y]$, where $y=(A^TA+B^TB)x$.
Suppose that the current $\widetilde X$ is orthogonal to $Y_{c}$.
Then we only need to make $\widetilde X_{\new}$
orthogonal to $y$. By \eqref{qrAXBX}, this amounts to
$$
\widetilde X_{\new}^Ty=Q_D^T\widetilde{X}^T(A^TA+B^TB)\widetilde Xd
=Q_D^T(R_A^TR_A+R_B^TR_B)d=\bm{0}.
$$
Therefore, the columns of $Q_D$ form an orthonormal basis
of the orthogonal complement of $\mathcal{R}(d^{\prime})$ with
respect to $\mathbb{R}^k$, where $d^{\prime}=(R_A^TR_A+R_B^TR_B)d$.
In computation, we compute the full QR factorization of the
$k$-by-$1$ matrix $d^{\prime}$ using approximately $4k^2$ flops
\cite[p.249-250]{golub2012matrix},
take the second to last columns of its $Q$-factor to form $Q_D$,
and obtain the desired $\X_{\rm new}$.
Then following the analogous process to those
in Section~\ref{subsec:5-1}, by replacing $Q_d$ with $Q_D$,
we can carry out this purgation strategy to
obtain the reduced $\UU_{\rm new}$ and $\V_{\rm new}$
with little extra cost. We then extract an approximation
to $(\alpha_{*},\beta_{*},u_{*},v_{*},x_{*})$ with respect
to them, and expand the searching subspaces in the thick-restart
way described previously.

We now unify to denote by $(\alpha,\beta,u,v,x)$ the approximate
GSVD component obtained by one of RCPF-JDGSVD, RCPF-HJDGSVD
and RIF-HJDGSVD with respect to the left and right searching subspaces
$\UU$, $\V$ and $\X$. If the current $\X$ is
orthogonal to $\mathcal{R}(Y_c)$, then
$$
\UU=(A-X_cY_c^T)\X=A\X \mbox{\ \ and\ \ }
\V=(B-X_cY_c^T)\X=B\X.
$$
Therefore, for such an $\X$, in the JDGSVD type algorithms of
\cite{huang2022harmonic,huang2023cross} and this paper,
computationally, we never work on the matrix pair in \eqref{defpair}
explicitly, instead we always work on $(A,B)$ directly.
The gains are twofold: the resulting JDGSVD type algorithms are more efficient
at each step; in finite precision arithmetic,
they enable us to compute the approximations more
accurately. The first point is straightforward.
But the second point is subtle and quite complicated, and its
arguments and details are out of the
scope of this paper.

Recall that the reduced $\X_{\rm new}$ can be made orthogonal to
$\mathcal{R}(Y_c)$
and is an instance of the current subspace $\X$ when
computing $(\alpha_*,\beta_*,u_*,v_*,x_*)$. We present
the following important result.

\begin{theoremmy}
Suppose that the current right subspace $\X$ is
orthogonal to $\mathcal{R}(Y_c)$.
Then the expanded $\mathcal{X}$'s are also orthogonal to
$\mathcal{R}(Y_c)$ at subsequent expansion steps.
\end{theoremmy}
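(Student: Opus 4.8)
The plan is to argue inductively on the expansion steps, using the explicit formula for the expansion vector $x_{+}$ in \eqref{expandX} together with the structure of the correction equation \eqref{cortau}. The base case is the hypothesis: the current $\X$ is orthogonal to $\mathcal{R}(Y_c)$. For the inductive step, it suffices to show that if $\widetilde X$ (an orthonormal basis of $\X$) satisfies $Y_c^T\widetilde X = \bm{0}$, then the single new basis vector $x_{+}$ produced in \eqref{expandX} also satisfies $Y_c^T x_{+} = \bm{0}$; since $\widetilde X_{\new} = [\widetilde X,\ x_{+}]$, this gives $Y_c^T\widetilde X_{\new} = \bm{0}$, i.e. $\X_{\new}\perp\mathcal{R}(Y_c)$, and the induction propagates.

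First I would observe that because $x_{+}$ is obtained from the approximate correction-equation solution $t$ by orthogonalizing against $\widetilde X$ (see \eqref{expandX}) and $\widetilde X\perp\mathcal{R}(Y_c)$ by the inductive hypothesis, it is enough to prove $Y_c^T t = \bm{0}$. Here one must be careful that $t$ is only an \emph{approximate} solution of \eqref{cortau}, typically produced by a Krylov method such as MINRES; so the argument should not rely on $t$ solving \eqref{cortau} exactly, but rather on the Krylov iterate lying in the appropriate Krylov subspace. The key algebraic facts to assemble are: (i) the right-hand side $-r$ of \eqref{cortau}, with $r = \tilde\beta A^T\tilde u - \tilde\alpha B^T\tilde v$ as in \eqref{residual}, satisfies $Y_c^T r = \bm{0}$, because on the deflated pair one works with $(A - X_cY_c^T)$ and $(B - X_cY_c^T)$ and the converged relations $AX_c = U_cC_c$, $BX_c = V_cS_c$ force $X_c^T A^T\tilde u$ and $X_c^T B^T\tilde v$ to combine suitably against $\tilde u\perp\mathcal{U}$, $\tilde v\perp\mathcal{V}$ in the sense of the Galerkin conditions — this is the place to invoke the bi-orthogonality $Y_c^TX_c = I_j$ and the fact that the searching subspaces are built as $A\X$, $B\X$ with $\X\perp\mathcal{R}(Y_c)$; and (ii) the coefficient operator $M := (I - \tilde y\tilde x^T)(A^TA - \rho^2 B^TB)(I - \tilde x\tilde y^T)$ maps $\mathcal{R}(Y_c)^\perp$ into itself, or more precisely $Y_c^T M w = \bm{0}$ whenever $Y_c^T w = \bm{0}$. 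For (ii) one uses that $\tilde x\in\X\perp\mathcal{R}(Y_c)$ so $Y_c^T(I-\tilde x\tilde y^T)w = Y_c^T w = \bm{0}$, that $\tilde y = \tilde\alpha A^T\tilde u + \tilde\beta B^T\tilde v$ (see \eqref{defy}) satisfies $Y_c^T\tilde y = \bm{0}$ by the same computation as in (i) so the outer projector also preserves orthogonality to $\mathcal{R}(Y_c)$, and for the middle factor one checks $Y_c^T(A^TA - \rho^2 B^TB)w = \bm{0}$ for $w\perp\mathcal{R}(Y_c)$ using $A^TAX_c, B^TBX_c$ expressed through $U_c, V_c$ and then the deflation relations — again the crucial point is that in the algorithm one actually runs MINRES on the deflated pair, so the operator is the deflated $M$, for which $\mathcal{R}(Y_c)^\perp$ is exactly the natural invariant restriction subspace by Proposition~4.1 of \cite{huang2023cross}.

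Given (i) and (ii), the conclusion for exact $t$ is immediate: $t$ solves $Mt = -r$ with $t\perp\tilde y$, and since $\mathcal{R}(Y_c)^\perp$ is $M$-invariant and contains $-r$, the solution lies in $\mathcal{R}(Y_c)^\perp$, hence $Y_c^T x_{+} = \bm{0}$. For the inexact case, I would note that the MINRES iterates for $Mt = -r$ started from the zero vector lie in the Krylov subspace $\mathcal{K}_j(M, -r) = \spans\{-r, M(-r), M^2(-r),\dots\}$, which by (i) and (ii) is contained in $\mathcal{R}(Y_c)^\perp$; thus every computed approximate solution $t$ satisfies $Y_c^T t = \bm{0}$, and orthogonalizing against $\widetilde X$ keeps this, so $Y_c^T x_{+} = \bm{0}$ and therefore $Y_c^T\widetilde X_{\new} = \bm{0}$. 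Finally, one should remark that the thick-restart and purgation steps of Section~\ref{subsec:5-1}--\ref{subsec:5-2} produce restarting bases of the form $\widetilde X Q_d$ or $\widetilde X Q_D$, which inherit orthogonality to $\mathcal{R}(Y_c)$ from $\widetilde X$ automatically, so the invariance is maintained across restarts as well.

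The main obstacle I expect is item (ii) — verifying cleanly that the (deflated) correction-equation operator and both oblique projectors preserve $\mathcal{R}(Y_c)^\perp$. This requires carefully tracking which matrices are the deflated ones $A(I-X_cY_c^T)$, $B(I-X_cY_c^T)$ versus the originals $A,B$ (the paper emphasizes it ``always works on $(A,B)$ directly'' precisely when $\X\perp\mathcal{R}(Y_c)$, so the two coincide on the relevant subspace), and using the converged relations plus $Y_c^TX_c = I_j$ to show the cross terms vanish. The projector-preservation and the $-r\perp\mathcal{R}(Y_c)$ claim are then short computations, but stating them at the right level of generality (so that the inexact-Krylov argument goes through unchanged) is where care is needed.
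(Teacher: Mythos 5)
Your overall reduction is the same as the paper's: induct over expansion steps, observe that $x_{+}$ in \eqref{expandX} is a combination of $t$ and columns of $\widetilde X\perp\mathcal{R}(Y_c)$, so everything hinges on $Y_c^Tt=\bm{0}$. The gap is in how you try to obtain $Y_c^Tt=\bm{0}$. First, once $j\geq 1$ components have converged, the equation actually solved is \eqref{deflat}, not \eqref{cortau}, so the relevant operator and right-hand side carry the projectors built from $X_p=[X_c,x]$ and $Y_p=[Y_c,y]$. Second, and decisively, both of your key algebraic claims are false. Because the bi-orthogonality is $Y_c^TX_c=I_j$ (the columns of $Y_c$ are not orthonormal), the oblique projector $I-Y_cX_c^T$ has range $\mathcal{R}(X_c)^{\perp}$, so the right-hand side $-(I-Y_cX_c^T)r$ is orthogonal to $X_c$ (and to $x$), not to $Y_c$; and neither is $-r$ itself orthogonal to $\mathcal{R}(Y_c)$ in general. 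Likewise $I-Y_pX_p^T$ maps everything into $\mathcal{R}(X_p)^{\perp}$, and $\mathcal{R}(Y_c)^{\perp}$ is not invariant under the projected operator: already $Y_c^T(A^TA-\rho^2B^TB)w\neq\bm{0}$ for a generic $w\perp\mathcal{R}(Y_c)$, and your auxiliary claim $Y_c^T\tilde y=\bm{0}$ also fails (what is true, and what the paper records, is $Y_c^Tx=\bm{0}$ and $X_c^Ty=\bm{0}$, i.e.\ $Y_p^TX_p=I_{j+1}$, which serves only to guarantee that the projectors in \eqref{deflat} are genuine oblique projectors). Consequently your Krylov-invariance argument, even if repaired, would show that the MINRES iterates stay orthogonal to $X_p$ — not to $Y_c$, which is what the theorem needs.

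The paper's proof gets $Y_c^Tt=\bm{0}$ far more directly: the requirement $t\perp Y_p$ is part of the formulation of \eqref{deflat} (and of the corresponding step of Algorithm~\ref{algorithm:1}), i.e.\ the admissible set of solutions, exact or approximate, is $\mathcal{R}(Y_p)^{\perp}$ by construction, so any returned $t$ is orthogonal to $Y_c$ by fiat rather than through an invariance property of the coefficient operator or right-hand side. With that, $Y_c^T x_{+}=Y_c^T(I-\widetilde X\widetilde X^T)t/\|(I-\widetilde X\widetilde X^T)t\|=Y_c^Tt/\|(I-\widetilde X\widetilde X^T)t\|=\bm{0}$, and the expanded subspace stays orthogonal to $\mathcal{R}(Y_c)$; one step suffices, and restart or purgation bases of the form $\widetilde XQ_d$, $\widetilde XQ_D$ inherit the property trivially, as you note. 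Your instinct to worry about inexact inner solves is reasonable, but the correct resolution is that the orthogonality constraint is imposed on the iterates themselves, not that $\mathcal{R}(Y_c)^{\perp}$ is an invariant subspace of the projected system — it is not.
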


\begin{proof}
We only need to prove the assertion for one expansion step.
For the current $\X$, if $(\alpha,\beta,u,v,x)$ does not converge yet,
then in the expansion phase, the original correction equation
\eqref{cortau} for $\ell>1$ becomes (cf.
\cite{huang2022harmonic,huang2023cross})
\begin{equation}\label{deflat}
	(I-Y_pX_p^T)(A^TA-\rho^2B^TB)(I-X_pY_p^T)t=-(I-Y_cX_c^T)r
	\quad\mbox{for}\quad
	t\perp Y_p,
\end{equation}
where $r$ is the residual of $(\alpha,\beta,u,v,x)$ defined by
\eqref{residual}, $\rho=\tau$ or $\theta=\frac{\alpha}{\beta}$
(see \eqref{cortau} and the paragraph that follows), and
$X_p=[X_c,x]$, $Y_p=[Y_c,y]$ with $X_c$ and $Y_c$ defined in \eqref{convx}
and $y$ defined by \eqref{defy}.
It follows from $x\in\X\perp\mathcal{R}(Y_c)$ that
$X_p$ and $Y_p$ are bi-orthogonal:
$Y_p^TX_p=I_{j+1}$, meaning that $I-Y_pX_p^T$
and $I-X_pY_p^T$ are oblique projectors.
With an approximate solution $t$ of \eqref{deflat} found,
we orthonormalize it against the orthonormal basis matrix
$\widetilde X$ of $\X$ to obtain the expansion vector
$x_{+}$ and update the basis matrices
$\widetilde X$, $\widetilde U$ and $\widetilde V$ by
\eqref{expandX}--\eqref{updaterb}.
By the way that $\widetilde X$ is augmented,
the resulting expanded $\X$ is automatically orthogonal
to $\mathcal{R}(Y_c)$ at the expansion step.
\end{proof}

The assertion in this theorem is
crucial and ensures that we always work on $(A,B)$ rather than
the explicitly deflated matrix pair in \eqref{defpair}
when using each of the six JDGSVD type algorithms to
computing more than one GSVD components.

Once $(\alpha,\beta,u,v,x)$ has converged, we add it to the
previous converged partial GSVD $(C_c,S_c,U_c,V_c,X_c)$
of $(A,B)$, and set $j:=j+1$.
The RCPF-JDGSVD, RCPF-HJDGSVD and RIF-HJDGSVD algorithms
proceed in this way until all the $\ell$ desired GSVD
components of $(A,B)$ are found, and
the computed $X_c=[x_{1,c},\ldots,x_{\ell,c}]$
and $Y_c=[y_{1,c},\ldots,y_{\ell,c}]$ satisfy $Y_c^TX_c=I_{\ell}$.

\begin{algorithm}[tbph]
	\caption{The thick-restart RCPF-JDGSVD, RCPF-HJDGSVD,
		RIF-HJDGSVD algorithms with deflation and purgation for
		the target $\tau$.}
	\renewcommand{\algorithmicrequire}{\textbf{Input:}}
	\renewcommand{\algorithmicensure} {\textbf{Output:}}
	\begin{algorithmic}[1]\label{algorithm:1}
		\STATE{\textbf{Initialization:}\ Set $k=1$, $k_c=0$, and initialize $C_c=[\ ], S_c=[\ ], U_c=[\ ],
			V_c=[\ ],X_c=[\ ]$, $Y_c=[\ ]$.
			Set $\widetilde U=[\ ]$, $\widetilde V=[\ ]$,
			$\widetilde X=[\ ]$, and choose a unit-length starting vector $ x_{+}=x_0$.}
		
		\WHILE{$k\geq0$}
		
		\STATE{\label{step:3}Set $\widetilde X=[\widetilde X,  x_{+}]$,
			and update the QR factorizations
			$A\widetilde X=\widetilde UR_A$,
			$B\widetilde X=\widetilde VR_B$.}

		\STATE{\label{step:4}
			\textbf{RCPF-JDGSVD:} Compute the GSVD of $(R_A,R_B)$, and pick
			up the generalized singular value $\theta$ closest to the target $\tau$.
			
			\textbf{RCPF-HJDGSVD:} Form $H_c$ and $G_c$ defined in \eqref{cpfeq20}.
Compute the generalized eigendecomposition of $(G_c,H_c)$,
pick up the eigenvector $d$ corresponding to the largest eigenvalue $\mu$ in magnitude,
and compute the approximate $\theta=\frac{\|R_Ad\|}{\|R_Bd\|}$.
			
			\textbf{RIF-HJDGSVD:} Form $G_{\tau}$ and $H_{\tau}$ defined in \eqref{harmoniceq}.
Compute the generalized eigendecomposition of $(G_{\tau},H_{\tau})$,
pick up the eigenvector $d$ associated with the eigenvalue $\nu$
such that $\sqrt{\tau^2+\frac{1}{\nu}} $ is closest to $\tau$, and
compute $\theta=\frac{\|R_Ad\|}{\|R_Bd\|}$.}
		
		\STATE{Form $H_{\theta}$ defined by \eqref{comGr}, compute the
eigendecomposition of $H_{\theta}$,
and pick up the eigenvector $\bar d$ corresponding to the smallest eigenvalue.}
		
		\STATE{\label{step:5}Form $\bar e=R_A\bar d$, $\bar f=R_B\bar d$,
and calculate $\bar \delta=\sqrt{\|\bar e\|^2+\|\bar f\|^2}$.
			Compute the approximate GSVD component
			$(\bar \alpha,\bar \beta,\bar  u,\bar v,\bar  x) =\left(\dfrac{\|\bar e\|}{\bar \delta},
			\dfrac{\|\bar f\|}{\bar \delta},
			\dfrac{1}{\bar \delta}\widetilde X\bar d,
			\dfrac{\widetilde U\bar e}{\|\bar e\|},
			\dfrac{\widetilde V\bar f}{\|\bar f\|} \right)$,
			the vector $\bar y=\bar \alpha A^T\bar u+\bar \beta B^T\bar  v$,
and the residual $r=\bar \beta A^T\bar u-\bar \alpha B^T\bar v$.}
		
		\IF{$\|r\|\leq (\bar\beta \|A\|_1+\bar\alpha \|B\|_1)\cdot tol$}
		
		\STATE{Update the matrices $C_c=\diag\{C_c,\bar\alpha\}$,
			$S_c=\diag\{S_c,\bar\beta\}$,
			$U_c=[U_c,\bar u]$, $V_c=[V_c,\bar v]$,
			$X_c=[X_c,\bar x]$, $Y_c=[Y_c, \bar y]$,
			and set $k_c=k_c+1$.}
		
		\STATE{\textbf{if} $k_c=\ell$ \textbf{then} return $(C_c,S_c,U_c,V_c,X_c)$ and stop. \textbf{fi}}
		
		\STATE{\label{step:9}Purge $\bar x,\bar u,\bar v$
from  $\mathcal{X},\UU,\V$, update the orthonormal basis matrices $\widetilde X$, $\widetilde U$, $\widetilde V$
			and projection matrices $R_A$, $R_B$. Set $k=k-1$, and go to step \ref{step:4}.}
		\ENDIF
		
		\STATE{\label{step:11}Form $X_{p}=[X_c,\bar x]$ and $Y_{p}=[Y_c,\bar y]$.
			Iteratively solve the correction equation
			\begin{equation*}
				(I-Y_pX_p^T)(A^TA-\rho^2 B^TB)(I-X_pY_p^T)t=-(I-Y_cX_c^T)r
				\quad\mbox{for}\quad t\perp Y_p
			\end{equation*}
			with $\rho=\tau$ or $\rho=\bar \alpha/\bar\beta$ until
			the relative residual norm fulfills \eqref{inncov}.}
		
		\STATE{\label{step:12}\textbf{if} $k=k_{\max}$ \textbf{then} perform
the thick-restart, and set $k=k_{\min}$. \textbf{fi}}
		
		\STATE{\label{step:13}Orthonormalize $t$ against $\widetilde X$
			to obtain the expansion vector $x_{+}$, and set $k=k+1$.}
		\ENDWHILE
	\end{algorithmic}
\end{algorithm}

\subsection{Thick restart refined and refined harmonic JDGSVD algorithms}\label{subsec:5-3}

Algorithm~\ref{algorithm:1} summarizes the  thick-restart
RCPF-JDGSVD, RCPF-HJDGSVD and RIF-HJDGSVD algorithms with
deflation and purgation for computing a partial
GSVD $(C_c,S_c,U_c,V_c,X_c)$ of $(A,B)$ with
the $\ell$ generalized singular values
closest to a given target $\tau$.
Each of these three algorithms demands the modules to
perform matrix-vector multiplications with $A$, $B$ and
$A^T$, $B^T$, the target $\tau>0$, the number $\ell$ of
desired GSVD components, a unit-length vector $x_0$
for the initial right searching subspace, and the stopping
tolerance $\textit{tol}$ for the outer iterations.
RCPF-HJDGSVD needs a device to act
$(B^TB)^{-1}$ so as to form and update the projection matrix
$H_{\mathrm{c}}$ in \eqref{cpfeq20}.
Other parameters include the minimum and maximum
dimensions $k_{\min}$ and $k_{\max}$ of searching subspaces,
the switching tolerance $\textit{fixtol}>0$ for
the correction equations \eqref{cortau} and \eqref{deflat} with the
fixed shift $\rho=\tau$ to their counterparts with the adaptively changing
shift $\rho=\theta$, and the stopping tolerance
$\tilde\varepsilon>0$ in \eqref{inncov} for the inner iterations.
By default, we set them as $3$, $30$, $10^{-4}$ and $10^{-4}$, respectively.

The authors in \cite{huang2022harmonic} have made a detailed
analysis on the cost of the thick-restart CPF-HJGSVD and
IF-HJDGSVD algorithms, which straightforwardly applies to  Algorithm~\ref{algorithm:1}.
The conclusion is that, for $k_{\max}\ll n$, the cost of each
algorithm is dominated by the matrix-vector multiplications
with $A,B$ and $A^T,B^T$, assuming that the MINRES method is
used to solve all the correction equations.
We remind that, for the thick-restart CPF-HJGSVD and RCPF-HJGSVD, this
conclusion requires that each linear system with the
coefficient matrix $B^TB$ can be solved efficiently at
cost of $\mathcal{O}(n)$ flops.

\section{Numerical experiments}\label{sec:6}
We illustrate the
performance of thick-restart RCPF-JDGSVD, RCPF-HJDGSVD and RIF-HJDGSVD algorithms
on several problems, and compare them with the standard and harmonic
JDGSVD algorithms \cite{huang2022harmonic,huang2023cross}: CPF-JDGSVD,
CPF-HJDGSVD and IF-HJDGSVD, showing the considerable
advantage of the refined and refined harmonic algorithms over their
respective unrefined counterparts.
All the numerical experiments were implemented on a
13th Gen Intel (R) Core (TM) i9-13900KF CPU 3.00 GHz
with 64 GB RAM using the MATLAB R2022b with the
machine precision $\epsilon_{\mathrm{mach}}=2.22\times10^{-16}$ under the Miscrosoft Windows 11 Pro 64-bit system.

\begin{table}[tbhp]
	{\small\caption{Test matrix pairs with their basic properties.}\label{table0}
		\begin{center}
			\begin{tabular}{cccccccccc} \toprule
				$A$&$B$&$m$&$p$&$n$&$nnz$&$\kappa(\bsmallmatrix{A\\B})$
				&$\sigma_{\max}$&$\sigma_{\min}$ \\ \midrule
				$\mathrm{dano3mip}^T$	&$T$ &15851 &3202  &3202  &91237  &1.81e+3 &1.28e+3 &1.00e-16\\
				$\mathrm{plddb}^T$		&$T$ &5049  &3069  &3069  &20044  &91.9    &61.2    &3.65e-3\\
				$\mathrm{barth}$		&$T$ &6691  &6691  &6691  &46510  &5.56    &3.16    &7.61e-19\\
				$\mathrm{large}^T$		&$T$ &8617  &4282  &4282  &33479  &3.53e+3 &2.42e+3 &2.25e-3\\
				$\mathrm{ns3Da}$		&$T$ &20414 &20414 &20414 &1740839&5.01    &4.02e-1 &1.86e-4\\
				$\mathrm{e40r0100}$		&$T$ &17281 &17281 &17281 &605403 &11.2    &9.56    &2.62e-8\\
				$\mathrm{rat}^T$		&$T$ &9408  &3136  &3136  &278314 &3.06    &1.42    &2.85e-1\\
				\!\!\!\!$\mathrm{Kemelmacher}$\!\!\!\!	&$T$ &28452 &9693  &9693  &129952 &68.0    &2.35e+2 &2.02e-3\\
				$\mathrm{lpi\_gosh}^T$	&$T$ &13455 &3792  &3792  &111327 &3.09e+2 &1.60e+2 &3.31e-16\\
				$\mathrm{rdist2}$		&$T$ &3198  &3198  &3198  &66426  &1.17e+3 &4.50e+2 &2.64e-5\\
				$\mathrm{deter7}^T$		&$T$ &18153 &6375  &6375  &56254  &6.89    &7.97    &6.97e-3\\
				$\mathrm{shyy41}$		&$T$ &4720  &4720  &4720  &34200  &1.89e+2 &1.85e+2 &8.26e-21\\
				$\mathrm{nemeth01}$     &$D$ &9506  &9505  &9506  &744064  &32.0 &7.61e+4	&2.06e-1\\
				$\mathrm{raefsky1}$     &$D$ &3242  &3241  &3242  &299891  &6.81 &8.13e+2 &2.18e-4\\
				$\mathrm{r05}^T$       &$D$ &9690  &5189  &5190  &114523   &62.4  &1.19e+4  &2.91e-1 \\
				$\mathrm{p010}^T$       &$D$ &19090 &10089 &10090 &138178  &60.2 &1.95e+4 &2.92e-1\\
				\!\!\!\!$\mathrm{scagr7\mbox{-}2b}^T$\!\!\!\!  &$D$ &13847 &9742  &9742  &55369  &2.60e+2 &8.93e+3 &9.20e-3\\
				$\mathrm{cavity16}$     &$D$ &4562  &4561  &4562  &147009  &2.19e+2 &1.32e+3 &9.85e-7\\
				$\mathrm{utm5940}$      &$D$ &5940  &5939  &5940  &95720   &52.5 &8.91e+2 &4.10e-9\\
				\bottomrule
			\end{tabular}
	\end{center}}
\end{table}

In Table~\ref{table0} we list all the test problems and some
of their basic properties, where for each matrix pair $(A,B)$,
$A$ or $A^T$ (so that $m\geq n$) is a sparse matrix from the
University of Florida Sparse Matrix Collection \cite{davis2011university}
and $B$ is either
$$
T=\begin{bmatrix}
		3&1&&\\
		1&\ddots&\ddots&\\
&\ddots&\ddots&1\\&&1&3
	\end{bmatrix}
\qquad\mbox{or}\qquad
D=\begin{bmatrix}
		1&-1&&\\
		&\ddots&\ddots&\\&&1&-1
	\end{bmatrix}
$$
with $p=n$ or $n-1$, the latter of which is the scaled
discrete approximation of the first order derivative
operator in dimension one, $nnz$ is the total number of
nonzero elements in $A$ and $B$, $\kappa(\bsmallmatrix{A\\B})$
is the condition number of $\bsmallmatrix{A\\B}$, and
$\sigma_{\max}$ and $\sigma_{\min}$ are the largest and smallest
nontrivial generalized singular values of $(A,B)$ computed,
for experimental purpose, by the MATLAB built-in
function {\sf gsvd}.
Note that for the matrix pairs $(A,B)$ with $B=T$, all
generalized singular values are nontrivial ones while
for those with $B=D$, there exists at least one infinite
generalized singular value. We take $B=T$ in Experiments~\ref{largeexact}--\ref{tengsvd}
and $B=D$ in the remaining experiments.

For all the six algorithms, we take the outer stopping tolerance
$\textit{tol}=10^{-8}$, the initial vector
$x_0={\sf mod}(1\!:\!n,4)/\|{\sf mod}(1\!:\!n,4)\|$ with
{\sf mod} the MATLAB built-in function, and all the other
parameters by default as described in Section~\ref{subsec:5-3}.
For the correction equations \eqref{cortau} and \eqref{deflat},
we take the $n$-dimensional zero vectors as initial guesses,
and use the MATLAB built-in function {\sf minres} to solve
them until it converges with the prescribed tolerance
$\tilde \varepsilon=10^{-4}$ in \eqref{inncov} or maximum
$n$ inner iteration steps have been consumed.
We stop each algorithm and output the computed partial GSVD
of $(A,B)$ if all the $\ell$ desired GSVD components of
$(A,B)$ have been found or the total $n$ correction equations
have been solved.

In all the tables, we abbreviate the thick-restart CPF-JDGSVD,
CPF-HJDGSVD, IF-HJDGSVD and RCPF-JDGSVD, RCPF-HJDGSVD,
RIF-HJDGSVD algorithms as CPF, CPFH, IFH, and RCPF, RCPFH,
RIFH, respectively.
We denote by $I_{\mathrm{out}}$ and $I_{\mathrm{in}}$ the
total numbers of outer and inner iterations used, and by
$T_{\mathrm{cpu}}$ the total CPU time in second counted by
the MATLAB built-in commands {\sf tic} and {\sf toc}.

\begin{exper}\label{largeexact}
	We compute one GSVD component of $(A,B) = (\mathrm{dano3mip}^T, T)$
	with the target $\tau = 3.75e+2$.
	The desired  $\sigma_*$ is one of the largest
	generalized singular values of $(A,B)$ and is
	clustered with its nearby ones.
\end{exper}

\begin{figure}[tbhp]
	\centering
	\includegraphics[width=0.80\textwidth]{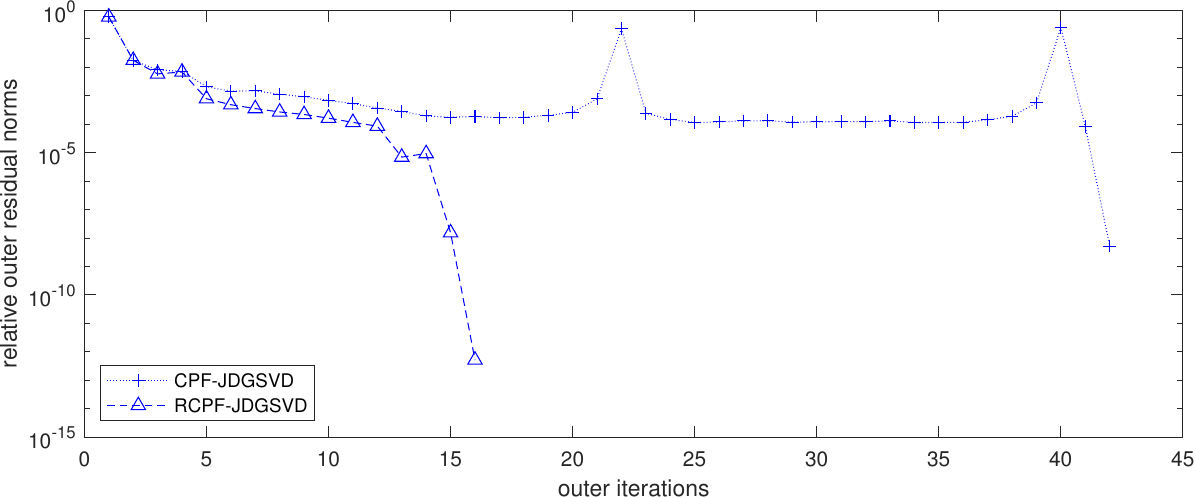}
	\caption{Computing one GSVD component of
		$(A,B)=(\mathrm{dano3mip}^T,T)$ with $\tau=3.75e+2$.}\label{fig11}
\end{figure}

For the matrix pairs in this and the next four experiments,
the matrices $B^TB$'s are symmetric positive definite and
well conditioned, whose Cholesky factorizations can be
efficiently computed using $\mathcal{O}(n)$ flops,
meaning that matrix-vector multiplications with its inversion
$(B^TB)^{-1}$ can be applied at cost of $\mathcal{O}(n)$ flops
for each linear system.
For this reason, during the subspace expansion phase of the
CPF-HJDGSVD and RCPF-HJDGSVD algorithms, we exploit the MATLAB
built-in command $\setminus$ to implement matrix-vector
multiplications with $(B^TB)^{-1}$ and to update the
intermediate matrix $H_{c}$ in \eqref{cpfeq20}.
Moreover, for experimental purpose, in this and the next
experiments we solve all the correction equations
\eqref{cortau} and \eqref{deflat} accurately by using
the LU factorizations of $(A^TA-\rho^2B^TB)$'s in order
to exhibit how RCPF-JDGSVD, RCPF-HJDGSVD and RIF-HJDGSVD
truly behave.

Figure~\ref{fig11} depicts the outer convergence curves of
CPF-JDGSVD and RCPF-JDGSVD for computing the desired GSVD
component of $(A,B)$. Clearly, CPF-JDGSVD converges slowly and irregularly and
even suffers from two sharp oscillations at outer iterations 22 and
40, respectively, and it uses 42 outer iterations to converge.
In contrast, RCPF-JDGSVD exhibits far smoother outer convergence
behavior, and uses only 16 outer iterations to attain the convergence,
thereby saving $62\%$ outer iterations.
This demonstrates the considerable superiority of
RCPF-JDGSVD to CPF-JDGSVD when computing an extreme
GSVD component of $(A,B)$.

\begin{exper}\label{interiorexact}
	We compute one GSVD component of $(A,B)=(\mathrm{plddb}^T,T)$
	with the interior generalized singular value $\sigma_*$
    closest to the target $\tau = 10$, which is clustered with its neighbors.
\end{exper}
\begin{figure}[tbhp]
	\centering
	\includegraphics[width=0.80\textwidth]{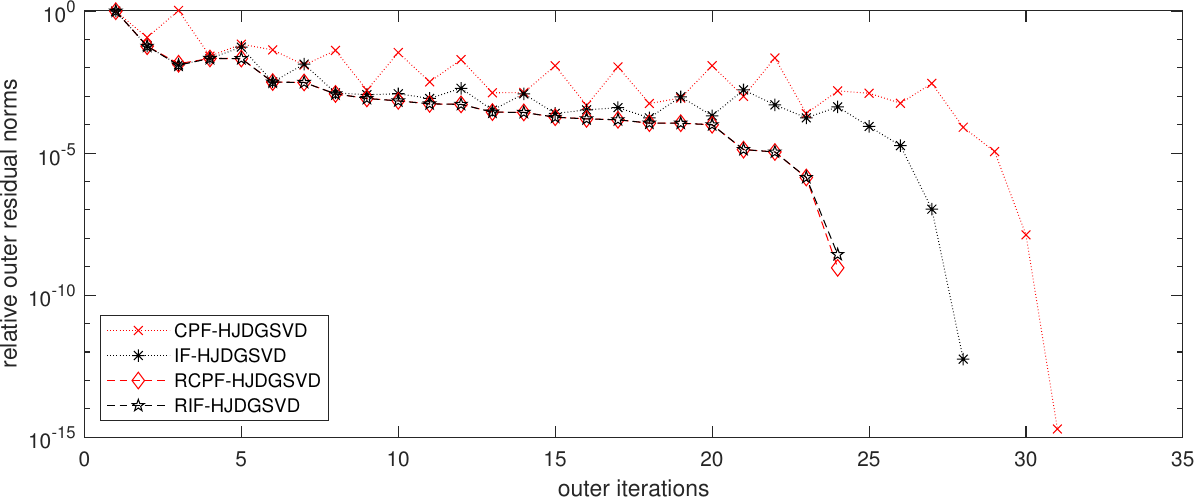}
	\caption{Computing one GSVD component of
		$(A,B)=(\mathrm{plddb}^T,T)$ with $\tau=10$.}\label{fig12}
\end{figure}

Figure~\ref{fig12} draws the outer convergence curves of the
two harmonic extraction-based algorithms CPF-HJDGSVD
and IF-HJDGSVD as well as the two refined harmonic extraction-based
algorithms RCPF-HJDGSVD and RIF-HJDGSVD.
We can see that RCPF-HJDGSVD and RIF-HJDGSVD converge more quickly
and much more smoothly than CPF-HJDGSVD and IF-HJDGSVD,
and the former two algorithms use $4$ and $7$ fewer outer iterations
than the latter two ones to reach the convergence, respectively.
We also observe that RCPF-HJDGSVD and RIF-HJDGSVD
perform equally well for this problem.

\begin{exper}\label{multiplelarge}
We compute one, five and ten largest GSVD components of
$(A,B)\\=(\mathrm{barth},T)$ with the target $\tau = 20$.
The desired generalized singular values are
clustered one another.
\end{exper}

%
%

\begin{table}[tbhp]
	\caption{Computing the $\ell$ GSVD components of $(A,B)=(\mathrm{barth},T)$
		with $\tau=20$.}\label{table21}
	\begin{center}
		\begin{tabular}{crcccrccc} \toprule
			$\ell$
			&Algorithm&$I_{\rm out}$&$I_{\rm in}$&$T_{\rm cpu}$
			&Algorithm&$I_{\rm out}$&$I_{\rm in}$&$T_{\rm cpu}$\\\midrule
			\multirow{3}{*}{1}
			&CPF\ \ \ &36 &655 &0.18
			&RCPF\ \ \ &33 &673 &0.18\\
			&CPFH\ \ \ &36 &653 &0.17
			&RCPFH\ \ \ &33 &673 &0.19\\
			&IFH\ \ \ &35 &697 &0.18
			&RIFH\ \ \ &33 &673 &0.18\\[0.25em]
			
			\multirow{3}{*}{5}
			&CPF\ \ \ &86 &2515 &0.56
			&RCPF\ \ \ &67 &2543 &0.55\\
			&CPFH\ \ \ &86 &2560 &0.58
			&RCPFH\ \ \ &67 &2543 &0.59\\
			&IFH\ \ \ &81 &2609 &0.56
			&RIFH\ \ \ &61 &2466 &0.51\\[0.25em]
			
			\multirow{3}{*}{10}
			&CPF\ \ \ &166 &5575 &1.25
			&RCPF\ \ \ &91 &4834 &0.94\\
			&CPFH\ \ \ &173 &5634 &1.30
			&RCPFH\ \ \ &91 &4834 &0.97\\
			&IFH\ \ \ &155 &5603 &1.25
			&RIFH\ \ \ &109 &5269 &1.09
			\\\bottomrule
		\end{tabular}
	\end{center}
\end{table}

Table~\ref{table21} reports the results obtained.
As we can see, for $\ell=1$, the refined and refined
harmonic extraction-based JDGSVD algorithms use
slightly fewer outer iterations and comparable inner
iterations and CPU time to achieve the convergence,
compared with the standard and harmonic extraction-based
JDGSVD algorithms.
However, for $\ell=5$, RCPF-JDGSVD, RCPF-HJDGSVD and
RIF-HJDGSVD are considerably faster and use nearly
$20$ fewer outer iterations than CPF-JDGSVD, CPF-HJDGSVD
and IF-HJDGSVD, respectively, which save
$22.1\%\sim 24.5\%$ of the outer iterations, though the six JDGSVD
algorithms use almost the same inner iterations and
CPU time to converge.
For $\ell=10$, RIF-HJDGSVD uses $29\%$ fewer
outer iterations, $6\%$ fewer inner iterations and $13\%$
less CPU time than IF-HJDGSVD; RCPF-JDGSVD and RCFP-HJDGSVD
save more than $45\%$ of the outer iterations, $13\%$ of the inner
iterations, and $24\%$ of the CPU time, respectively, relative to
CPF-JDGSVD and CPF-HJDGSVD.
Of the three refined and refined harmonic
algorithms, RCPF-JDGSVD and RCPF-HJDGSVD are better than
RIF-HJDGSVD due to their faster outer convergence,
and RCPF-JDGSVD is the most efficient due to its least CPU time.

Clearly, for the sake of faster outer convergence and
higher overall efficiency, RCPF-JDGSVD is the most
recommended algorithm for this problem.
This should be expected as the generalized singular values
of interest are extreme ones, for which the standard
extraction suits better than the harmonic extraction.
As a consequence, RCPF-JDGSVD performs better than
RCPF-HJDGSVD and RIF-HJDGSVD, though its advantage
is not so obvious relative to RCPF-HJDGSVD.

\begin{exper}\label{multipleinterior}
We compute one, five and ten interior GSVD components of
the matrix pair $(A,B)=(\mathrm{large}^T,T)$ with
the generalized singular values closest to $\tau = 14$ highly
clustered.
\end{exper}

\begin{figure}[tbhp]
	\centering
	\includegraphics[width=0.80\textwidth]{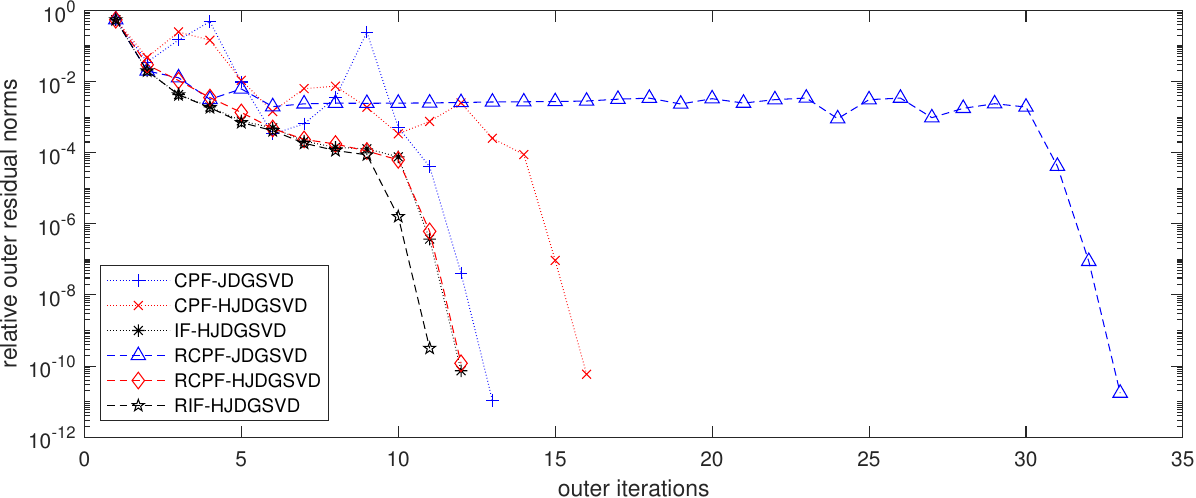}
	\caption{Computing one GSVD component of
		$(A,B)=(\mathrm{large}^T,T)$ with $\tau=14$.}\label{fig22}
\end{figure}

%
%

\begin{table}[tbhp]
	\caption{Computing the $\ell$ GSVD components of $(A,B)=(\mathrm{large}^T,T)$
		with $\tau=14$.}\label{table22}
	\begin{center}
		\begin{tabular}{crcccrccc} \toprule
			$\ell$
			&Algorithm&$I_{\rm out}$&$I_{\rm in}$&$T_{\rm cpu}$
			&Algorithm&$I_{\rm out}$&$I_{\rm in}$&$T_{\rm cpu}$\\ \midrule
			\multirow{3}{*}{1}
			&CPF\ \ \ &13&3266&0.23
			&RCPF\ \ \ &33&6068&0.44\\
			&CPFH\ \ \ &16&2677&0.18
			&RCPFH\ \ \ &12&3109&0.22\\
			&IFH\ \ \ &12&3544&0.24
			&RIFH\ \ \ &11&3332&0.22\\[0.25em]
			
			\multirow{3}{*}{5}
			&CPF\ \ \ &65&42115&2.84
			&RCPF\ \ \ &720&635121&43.5\\
			&CPFH\ \ \ &43&13255&0.93
			&RCPFH\ \ \ &25&16434&1.11\\
			&IFH\ \ \ &29&15602&1.06
			&RIFH\ \ \ &27&23767&1.61\\[0.25em]
			
			\multirow{3}{*}{10}
			&CPF\ \ \!&4291&4084862&358
			&RCPF\ \ \!&771&720147&50.7\\
			&CPFH\ \ \!&61&29032&2.27
			&RCPFH\ \ \!&39&33313&2.48\\
			&IFH\ \ \!&43&30135&2.23
			&RIFH\ \ \!&36&30570&2.26
			
			\\\bottomrule
		\end{tabular}
	\end{center}
\end{table}

Figure~\ref{fig22} depicts the outer convergence curves of the
six JDGSVD algorithms for computing one desired GSVD component
of $(A,B)$, and Table~\ref{table22} displays all the results
obtained.

For $\ell=1$, all the six algorithms succeed to compute the
desired GSVD component. As is seen from Figure~\ref{fig22}
and Table~\ref{table22},
RCPF-JDGSVD uses much more outer iterations and thus much
more inner iterations and CPU time to converge than CPF-JDGSVD because
of the outer convergence stagnation of the former.
Notice that this is an interior GSVD problem. The stagnation
implies that the expanded subspaces have little improvements
at those stagnation steps, causing that the accuracy of selected
approximate generalized singular values and right generalized
singular vectors remain almost the same.
This is because RCPF-JDGSVD selects approximate generalized singular
values incorrectly or there is no good Ritz value at all
at those steps, which is an intrinsic
deficiency of the standard extraction for interior
eigenvalue, SVD and GSVD problems, as we have pointed out
in the introduction:
If this selection is done incorrectly or there are spurious
values at some step, the searching subspaces in the refined extraction are
expanded wrongly at that step and provide little
information on the desired generalized singular vectors.
In contrast, RCPF-HJDGSVD and RIF-HJDGSVD outperform
CPF-HJDGSVD and IF-HJDGSVD respectively, due to the
smoother and faster outer convergence and/or
better overall efficiency.
This is because the harmonic extraction approach
suits better for interior GSVD problems and can pick up the
approximate generalized singular values correctly.

For $\ell=5$, it seems from Table~\ref{table22} that
RCPF-JDGSVD uses significantly more outer and inner iterations
and much more CPU time than CPF-JDGSVD. As a matter of fact,
both CPF-JDGSVD and RCPF-JDGSVD are unreliable and fail
to deliver all the desired GSVD components in this case:
They find  only the first three desired GSVD components
but recompute each of the first two twice.
RCPF-HJDGSVD uses nearly half of the outer iterations though a
little bit more inner iterations and CPU time relative
to CPF-HJDGSVD. RIF-HJDGSVD uses slightly fewer and
considerably more inner iterations and CPU time than
IF-HJDGSVD. Obviously, regarding outer convergence,
RCPF-HJDGSVD and RIF-HJDGSVD are the best.

For $\ell=10$, only CPF-HJDGSVD, IF-HJDGSVD and RCPF-HJDGSVD,
RIF-HJDGSVD succeed to compute all the desired GSVD components
of $(A,B)$, while CPF-JDGSVD computes the first three desired
ones three times in the beginning and fails to compute the
fourth one when $n$ correction equations are solved, and
RCPF-JDGSVD only computes the first six desired GSVD components
and recomputes the first three ones twice or three times.
As a consequence, CPF-JDGSVD and RCPF-JDGSVD fail to
solve the concerning GSVD problem correctly and are thus
unreliable for this interior GSVD problem. On the contrary,
RCPF-HJDGSVD and RIF-HJDGSVD outperform CPF-HJDGSVD and
IF-HJDGSVD as they use considerably fewer outer iterations
and comparable inner iterations; these four algorithms
outperform CPF-JDGSVD and RCPF-JDGSVD significantly.

Clearly, due to the smoother and faster outer convergence and the
reliability,
RCPF-HJDGSVD and RIF-HJDGSVD are the most suitable choices
for this interior GSVD problem.

\begin{exper}\label{tengsvd}
We compute ten GSVD components of the matrix pairs
$(A,B)=(\mathrm{ns3Da},T)$, $(\mathrm{e40r0100},T)$,
$(\mathrm{Kemelmacher},T)$, $(\mathrm{rat}^T,T)$, $(\mathrm{lpi\_gosh}^T,T)$, $(\mathrm{rdist2},T)$, $(\mathrm{deter7}^T,T)$ and
$(\mathrm{shyy41},T)$ with the targets $\tau = 2$, $11$,
$0.2$, $0.15$, $14.3$, $19$, $4.5$ and $0.8$, respectively.
The desired generalized singular values of $(\mathrm{ns3Da},T)$,
$(\mathrm{e40r0100},T)$ and $(\mathrm{Kemelmacher},T)$,
$(\mathrm{rat}^T,T)$ are extreme, i.e., the largest or smallest,
and are clustered, and
those of the remaining four matrix pairs are both interior and clustered.
\end{exper}	

\begin{table}[tbhp]
	\caption{Results on test matrix pairs in Example~\ref{tengsvd}, where $\mathrm{Kemelm}$ is the abbreviation of $\mathrm{Kemelmacher}$.}\label{table3}
	\begin{center}
		\begin{tabular}{crcccrccc} \toprule
			$A$
			&\!\!Algorithm\!\!&\!$I_{\rm out}$\!&$I_{\rm in}$&$T_{\rm cpu}$
			&\!\!Algorithm\!\!&\!$I_{\rm out}$\!&$I_{\rm in}$&$T_{\rm cpu}$\\\midrule
			
			\multirow{3}{*}{$\mathrm{ns3Da}$}
			&CPF\ \ \ &126&3823&6.81
			&RCPF\ \ \ &89&3458&6.03 \\
			&CPFH\ \ \ &119&3858&6.87
			&RCPFH\ \ \ &90&3471&6.17 \\
			&IFH\ \ \ &113&3776&6.85
			&RIFH\ \ \ &100&3646&6.49 \\[0.25em]
			
			\multirow{3}{*}{$\mathrm{e40r0100}$}
			&CPF\ \ \ &141&9507&5.20
			&RCPF\ \ \ &58&7656&3.86\\
			&CPFH\ \ \ &91&9094&4.74
			&RCPFH\ \ \ &58&7549&3.87\\
			&IFH\ \ \ &84&9020&4.74
			&RIFH\ \ \ &58&7615&3.83\\ [0.25em]
			
			\multirow{3}{*}{\!$\mathrm{Kemelm}$\!}
			&CPF\ \ \ &45&48004&9.82
			&RCPF\ \ \ &34&45001&9.09\\
			&CPFH\ \ \ &39&48669&10.1
			&RCPFH\ \ \ &37&48271&10.0\\
			&IFH\ \ \ &35&45056&9.25
			&RIFH\ \ \ &36&46594&9.60\\ [0.25em]
			
			\multirow{3}{*}{$\mathrm{rat}^T$}
			&CPF\ \ \ &172&2364&0.70
			&RCPF\ \ \ &81&2289&0.54 \\
			&CPFH\ \ \ &94&2191&0.54
			&RCPFH\ \ \ &80&2232&0.56 \\
			&IFH\ \ \ &105&2118&0.54
			&RIFH\ \ \ &81&2289&0.54 \\ [0.25em]
			
			\multirow{3}{*}{$\mathrm{lpi\_gosh}^T$}
			&CPF\ \ \ &52&6042&0.81
			&RCPF\ \ \ &43&6255&0.77\\
			&CPFH\ \ \ &44&5661&0.71
			&RCPFH\ \ \ &37&5203&0.63\\
			&IFH\ \ \ &39&5405&0.67
			&RIFH\ \ \ &36&5023&0.60\\ [0.25em]

			\multirow{3}{*}{$\mathrm{rdist2}$}
			&CPF\ \ \ &68&5397&0.48
			&RCPF\ \ \ &46&4476&0.39\\
			&CPFH\ \ \ &51&4485&0.40
			&RCPFH\ \ \ &44&4348&0.40\\
			&IFH\ \ \ &54&5238&0.45
			&RIFH\ \ \ &47&4734&0.41\\ [0.25em]
			
			\multirow{3}{*}{$\mathrm{deter7}^T$}
			&CPF\ \ \ &87&7208&1.12
			&RCPF\ \ \ &68&6820&1.04\\
			&CPFH\ \ \ &78&6876&1.07
			&RCPFH\ \ \ &55&6716&0.97\\
			&IFH\ \ \ &69&7430&1.11
			&RIFH\ \ \ &57&7418&1.06\\[0.25em] 			
			
			\multirow{3}{*}{$\mathrm{shyy41}$}
			&CPF\ \ \ &\!\!4727\!\!&\!7644960\!&\!\!639\!\!
			&RCPF\ \ \ &\!\!4724\!\!&\!3555937\!&263\\
			&CPFH\ \ \ &\!\!4727\!\!&\!4493658\!&\!\!1.13e+3\!\!
			&RCPFH\ \ \ &66&81061&6.35\\
			&IFH\ \ \ &\!\!4728\!\!&\!\!17581871\!\!&\!\!1.49e+3\!\!
			&RIFH\ \ \ &63&79204&6.29 \\
			\bottomrule
		\end{tabular}
	\end{center}
\end{table}

Table~\ref{table3} reports all the results obtained.
For the computation of the largest GSVD components of
$(\mathrm{ns3Da},T)$ and $(\mathrm{e40r0100},T)$,
we observe from Table~\ref{table3} that the
refined and refined harmonic extraction-based RCPF-JDGSVD, RCPF-HJDGSVD, RIF-HJDGSVD algorithms outperform
the standard and harmonic extraction-based CPF-JDGSVD,
CPF-HJDGSVD, IF-HJDGSVD algorithms as they use
moderately to substantially fewer outer iterations,
slightly to moderately fewer inner iterations and less CPU time.
For $(\mathrm{ns3Da},T)$, RCPF-JDGSVD and RCPF-HJDGSVD are very
comparable, and they both are superior to RIF-HJDGSVD due to
the faster outer convergence and higher overall efficiency.
For $(\mathrm{e40r0100},T)$, all the three refined and refined
harmonic JDGSVD algorithms are suitable.

For the computation of the smallest GSVD components, we see
from Table~\ref{table3} that for $(\mathrm{Kemelmacher},T)$,
RCPF-JDGSVD and RCPF-HJDGSVD use moderately or slightly fewer
outer and inner iterations and less CPU time than CPF-JDGSVD
and CPF-HJDGSVD, respectively, while RIF-HJDGSVD and IF-HJDGSVD
work equally well.
Obviously, except for CPF-JDGSVD, all the other five
algorithms are appropriate for the concerning GSVD problem
of this matrix pair.
Nonetheless, as far as both the outer convergence and
overall efficiency are concerned, RCPF-JDGSVD is the
most recommended.
For $(\mathrm{rat}^T,T)$, the refined and refined harmonic
extraction-based JDGSVD algorithms outperform the standard and harmonic extraction-based JDGSVD algorithms substantially,
as outer iterations indicate.
Regarding the overall efficiency, RCPF-JDGSVD outmatches
CPF-JDGSVD considerably as it uses slightly fewer
inner iterations and much less CPU time.
On the other hand, RCPF-HJDGSVD and RIF-HJDGSVD are
competitive with CPF-HJDGSVD and IF-HJDGSVD in terms of inner
iterations and CPU time.
For the sake of better convergence behavior, RCPF-JDGSVD,
RCPF-HJDGSVD and RIF-HJDGSVD are all proper choices
for this problem.

For the computation of interior GSVD components of
$(\mathrm{lpi\_gosh}^T,T)$, $(\mathrm{rdist2},T)$ and
$(\mathrm{deter7}^T,T)$, as far as the outer convergence
is concerned, RCPF-JDGSVD, RCPF-HJDGSVD and RIF-HJDGSVD
surpass CPF-JDGSVD, CPF-HJDGSVD and IF-HJDGSVD, respectively.
Regarding the overall efficiency, the three refined and refined
harmonic JDGSVD algorithms are superior to the standard and
harmonic JDGSVD algorithms as they use less CPU time and/or fewer inner iterations.

For $(\mathrm{shyy41},T)$, since the desired generalized
singular values are truly interior and clustered, as we
have elaborated in the introduction, the standard, harmonic
and even refined extraction approach-based JDGSVD algorithms
may face severe difficulties, which is confirmed by the
results in Table~\ref{table3}.
In fact, we observe very erratic convergence
behavior of CPF-JDGSVD, CPF-HJDGSVD, IF-HJDGSVD and
RCPF-JDGSVD; when $n$ correction equations are solved,
they fail to compute all the desired GSVD components and
only output seven, seven, eight and four converged ones, respectively.
On the contrary, RCPF-HJDGSVD and RIF-HJDGSVD are very successful
to compute all the desired GSVD components quickly.
Undoubtedly, RCPF-HJDGSVD and RIF-HJDGSVD are the only two
proper choices for this difficult problem; they are
competitive in terms of both outer convergence and overall efficiency.

\begin{exper}\label{6}
	 We compute one GSVD component of $(A,B)=(\mathrm{nemeth01},D)$
corresponding to the interior generalized singular value closest to the target $\tau=6.5$.
\end{exper}

\begin{figure}[tbhp]
	\centering
	\includegraphics[width=0.80\textwidth]{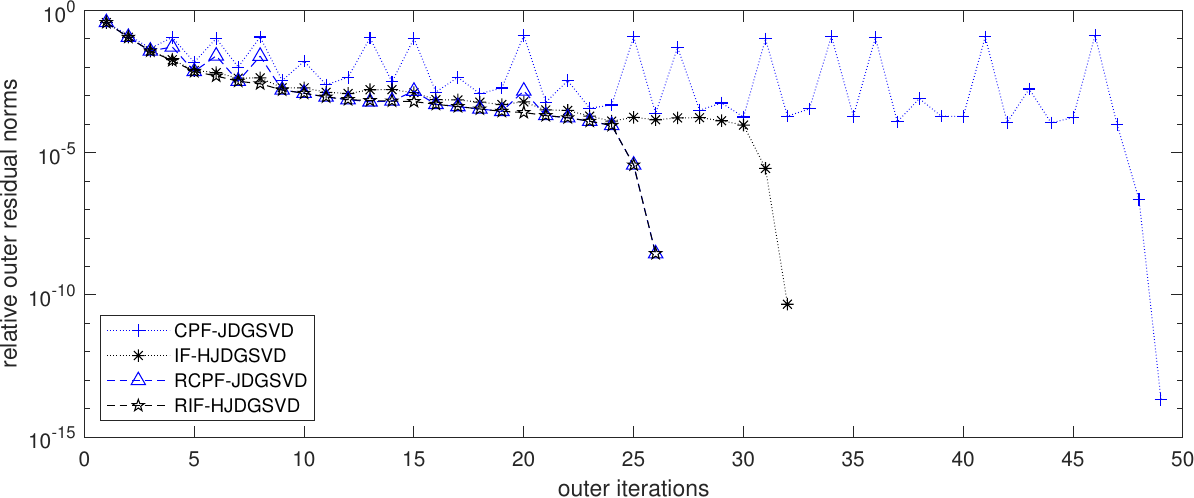}
	\caption{Computing one GSVD component of
		$(A,B)=(\mathrm{nemeth01},D)$ with $\tau=6.5$.}\label{fig4}
\end{figure}

For the matrix pairs in this and the next experiments, the
matrices $B$'s are rank deficient, so that CPF-HJDGSVD and
RCPF-HJDGSVD are not applicable.
We compute the desired GSVD components of $(A,B)$ using
CPF-JDGSVD, IF-HJDGSVD and RCPF-JDGSVD, RIF-HJDGSVD.
Specifically, for the problem in this experiment, for the
illustration of true outer convergence of these four
algorithms, we use the LU factorizations of the matrices
$(A^TA-\rho^2B^TB)$'s to solve all the correction equations
involved accurately, and draw their outer convergence curves
in Figure~\ref{fig4}.

As is seen from Figure~\ref{fig4}, in contrast to CPF-JDGSVD
and IF-HJDGSVD whose convergence is delayed because of the
frequent oscillations and stagnation, respectively, RCPF-JDGSVD and
RIF-HJDGSVD converge faster and much more smoothly, and use
$23$ and $6$ fewer outer iterations to achieve the convergence,
respectively. Moreover, since the standard extraction approach may pick up
approximate generalized singular values incorrectly for interior
GSVD problems, RCPF-JDGSVD may suffer from oscillations, as shown
by Figure~\ref{fig4} at iterations $4$--$9$ and
$19$--$21$. These phenomena confirm the intrinsic shortcoming of RCPF-JDGSVD for
computing truly interior GSVD components.
As a consequence, although RCPF-JDGSVD and RIF-HJDGSVD use
the same number of outer iterations to converge and
outperform CPF-JDGSVD and IF-HJDGSVD, we recommend RIF-HJDGSVD
for this problem.

\begin{exper}\label{Brankdeficient}
	We compute ten GSVD components of
$(A,B)=(\mathrm{raefsky1},D)$, $(\mathrm{r05}^T,D)$, $(\mathrm{p010}^T,D)$ and
$(\mathrm{scagr7\mbox{-}2b}^T,D)$, $(\mathrm{cavity16},D)$,  $(\mathrm{utm5940},D)$
with the targets $\tau=57$, $61$, $80$ and
$32.5$, $20.8$, $58$, respectively. All the desired generalized
singular values are interior and clustered ones.
This implies that all the correction equations may be hard
to solve by the MINRES method.
\end{exper}

%
%
%
%
%
%

\begin{table}[tbhp]
	\caption{Results on test matrix pairs in Example~\ref{Brankdeficient}.}\label{table5}
	\begin{center}
		\begin{tabular}{crcccrccc} \toprule
			$A$
			&\!\!\!Algorithm\!\!\!&$I_{\rm out}$\!&$I_{\rm in}$&$T_{\rm cpu}$
			&\!\!\!Algorithm\!\!\!&$I_{\rm out}$\!&$I_{\rm in}$&$T_{\rm cpu}$\\\midrule
			
			\multirow{2}{*}{$\mathrm{raefsky1}$}
			&CPF\ \ \  &52  &91560&16.0
			&RCPF\ \  &42  &76689&13.4  \\
			&IFH\ \ \  &37  &67609&13.5
			&RIFH\ \  &35  &66005&11.9  \\[0.25em]
			
			\multirow{2}{*}{$\mathrm{r05^T}$}
			&CPF\ \ \  &100  &60409&8.71
			&RCPF\ \  &99  &65947&9.76  \\
			&IFH\ \ \  &91 &64795&19.2
			&RIFH\ \  &69  &53142&7.73  \\[0.25em]
			
			\multirow{2}{*}{$\mathrm{p010}^T$}
			&CPF\ \ \  &104  &104797& 28.8
			&RCPF\ \  &107  &113825& 31.4  \\
			&IFH\ \ \  &87  &132654&55.2
			&RIFH\ \  &63  &81573&22.4  \\[0.25em]
			
			\multirow{2}{*}{\!\!\!\!$\mathrm{scagr7\mbox{-}2b}^T$\!\!\!\!}
			&CPF\ \ \  &185 &130938&22.1
			&RCPF\ \  &294  &232935&39.0  \\
			&IFH\ \ \  &65  &141444&23.3
			&RIFH\ \  &46  &112561&18.4  \\[0.25em]
	
			\multirow{2}{*}{$\mathrm{cavity16}$}
			&CPF\ \ \  &122 &95455&13.5
			&RCPF\ \  &\!\!4565\!&2107949&292  \\
			&IFH\ \ \  &101 &85234& 23.7
			&RIFH\ \  &68  &63650&8.95  \\[0.25em]
			
			\multirow{2}{*}{$\mathrm{utm5940}$}
			&CPF\ \ \  &\!\!4255\!\!  &\!\!14983907\!\!&\!\!2.13e+3\!\!
			&RCPF\ \  &\!\!5949\!\!&\!\!10827153\!\!&\!\!1.55e+3\!\!  \\
			&IFH\ \ \  &\!\!1951\!&\!\!8178349\!\!&\!\!2.36e+3\!\!
			&RIFH\ \  &87  &\!\!266135\!\!&35.8  \\[0.25em]
			\bottomrule
		\end{tabular}
	\end{center}
\end{table}

Table~\ref{table5} lists the results.
For $(\mathrm{raefsky1},D)$, we see that RCPF-JDGSVD and
RIF-HJDGSVD respectively outperform CPF-JDGSVD and
IF-HJDGSVD by using significantly or slightly fewer
outer and inner iterations and fairly less CPU time.

For $(\mathrm{r05}^T,D)$ and $(\mathrm{p010}^T,D)$,
RIF-HJDGSVD uses substaintially fewer outer and inner
iterations and considerably less CPU time than
CPF-JDGSVD and RCPF-JDGSVD, which themselves behave
similarly and outperform IF-HJDGSVD significantly with
much less CPU time.
Obviously, RIF-HJDGSVD is the best choice for these two
problems in terms of both outer convergence and overall efficiency.

For $(\mathrm{scagr7\mbox{-}2b}^T,D)$, CPF-JDGSVD and
RCPF-JDGSVD consume lots of outer iterations to converge. Actually,
RCPF-JDGSVD fails to solve the problem,
and it repeatedly computes the first two
desired GSVD components.
In contrast, IF-HJDGSVD and RIF-HJDGSVD use substantially
fewer outer iterations to compute all the desired GSVD components.
Between them, RIF-HJDGSVD wins IF-HJDGSVD with obviously
fewer outer and inner iterations and  less CPU time.
Clearly, for this problem, RIF-HJDGSVD is the best choice.

For $(\mathrm{cavity16},D)$ and $(\mathrm{utm5940},D)$,
the desired generalized singular values are truly interior
and clustered, causing that the standard, harmonic and even
refined extraction approaches may not perform well, as is
confirmed and shown in Table~\ref{table5}.
We observe sharp oscillations in the outer convergence curves
of CPF-JDGSVD, IF-HJDGSVD and RCPF-JDGSVD, of which the last
one even fails to compute all the desired GSVD components when
we use up the solutions of $n$ correction equations.
On the other hand, RIF-HJDGSVD converges smoothly and uses
marvelously fewer outer and inner iterations and extremely
less CPU time than the other three algorithms.
Clearly, RIF-HJDGSVD is the most suitable choice for these two problems.

\vspace{0.6em}

Summarizing all the numerical experiments, we can draw
two conclusions:
(\romannumeral1) For extreme GSVD components, refined and
refined harmonic RCPF-JDGSVD, RCPF-HJDGSVD and RIF-HJDGSVD
algorithms generally perform better than the standard and
harmonic CPF-JDGSVD, CPF-HJDGSVD and IF-HJDGSVD algorithms,
respectively; RCPF-JDGSVD is the best.
(\romannumeral2) For interior GSVD components, RCPF-HJDGSVD
and RIF-HJDGSVD outmatch CPF-JDGSVD, CPF-HJDGSVD and
IF-HJDGSVD,  RCPF-JDGSVD with smoother and faster outer
convergence and higher overall efficiency;
if $B$ has full column rank, then both RCPF-HJDGSVD and
RIF-HJDGSVD are suitable choices; otherwise RIF-HJDGSVD
is the most recommended algorithm due to its wider applicability.

\section{Conclusions}\label{sec:7}

The reliable and efficient computation of a partial GSVD
of a large regular matrix pair $(A,B)$ is vital in extensive
applications, and has attracted much attention in recent years.
In this paper, we have proposed three refined and refined
harmonic extraction-based JDGSVD methods: RCPF-JDGSVD, and
RCPF-HJDGSVD, RIF-HJDGSVD, and have developed practical
thick-restart algorithms with deflation and purgation that can
compute several GSVD components.
They fix erratic convergence behavior and intrinsic possible
non-convergence of the standard and harmonic JDGSVD methods:
CPF-JDGSVD, CPF-HJDGSVD and IF-HJDGSVD proposed and developed
in \cite{huang2022harmonic,huang2023cross}.
The three new JDGSVD algorithms suit better for the computation
of extreme and interior GSVD components of large regular matrix
pairs, respectively. Numerical experiments have demonstrated that
RCPF-HJDGSVD and RIF-HJDGSVD
are generally the best choices for computing interior GSVD
components, while RCPF-JDGSVD suits best for the extreme GSVD components.
These confirm our elaborations in the introduction
and the necessity and superiority of refined and refined harmonic
extraction-based JDGSVD methods.

There remain some important issues that should be given special
considerations.
When the desired generalized singular values are truly interior
and clustered, the correction equations \eqref{cortau} and \eqref{deflat}
involved in JDGSVD
algorithms are highly indefinite and ill conditioned, causing
that the MINRES method may be very costly even if the relative
residual of an approximate solution is only required to be fairly small.
Unfortunately, we have observed that commonly used incomplete
LU preconditioners generally work poorly and have no
acceleration effect.
Therefore, the efficient solutions of correction equations
constitute the bottleneck of all JDGSVD algorithms.
How to propose and develop specific preconditioners for the
correction equations is extremely important and definitely
deserves enough attention.
This constitutes our future work.
\bigskip

{\bf Funding} The first author was supported by the Youth Fund
of the National Science Foundation of China (No. 12301485) and
the Youth Program of the Natural Science Foundation of Jiangsu
Province (No. BK20220482), and the second author was supported
by the National Science Foundation of China (No.12171273).
\bigskip

{\bf Data Availability}
Enquires about data availability should be directed to the authors.

\section*{Declarations}

The two authors declare that they have no
financial interests, and the two authors read and approved the final
manuscript.

\bibliographystyle{siamplain}

\end{document}